\documentclass[12pt]{article}
\usepackage{amsmath,amsthm,amssymb}
\usepackage{mathrsfs}
\usepackage{color}
\usepackage{indentfirst}  
\usepackage{hyperref}
\usepackage{CJK}
\usepackage{enumitem}
\usepackage{amscd}
\usepackage{mathtools}
\usepackage{authblk}

\newtheorem{thm}{Theorem}[section]
\newtheorem{prop}[thm]{Proposition}

\newtheorem{ex}[thm]{Example}

\newtheorem{conj}[thm]{Conjecture}
\newtheorem{coro}[thm]{Corollary}
\newtheorem{lem}[thm]{Lemma}
\newtheorem{rem}[thm]{Remark}

\def\Vec{{\operatorname{Vec}}}

\def\cC{\mathcal{C}}

\def\cE{\mathcal{E}}
\def\cF{{\mathcal{F}}}

\def\cM{\mathcal{M}}
\def\cN{\mathcal{N}}
\def\cO{\mathcal{O}}

\def\cZ{\mathcal{Z}}
\def \C{\mathbb C}
\def \wt{{\rm wt}}
\def \Hom{{\rm Hom}}

\def\Id{{\rm Id}}
\def\FPdim{{\rm FPdim}}
\def\qdim{{\rm qdim}}

\newcommand\ol[1]{\overline{#1}}
\def\Rep{{\operatorname{Rep}}}

\newcommand{\ot}{\otimes}
\newcommand{\bt}{\boxtimes}

\title{Reverse Module Categories of Vertex Operator Algebras\thanks{MSC Primary 17B69, 18M20}}
\date{}

\author[1]{Chongying Dong}
\affil[1]{{\small Department of Mathematics, University of California, Santa Cruz, CA 95064.\newline E-mail: dong@ucsc.edu}}

\author[2]{Binghui Li}
\affil[2]{{\small Department of Mathematics, Sichuan University, Chengdu, 610064, China.\newline E-mail: 2024322010028@stu.scu.edu.cn}}

\author[3]{Li Ren}
\affil[3]{{\small Department of Mathematics, Sichuan University, Chengdu, 610064, China.\newline E-mail: RenL@scu.edu.cn}}

\author[4]{Yilong Wang}
\affil[4]{{\small Beijing Institute of Mathematical Sciences and Applications (BIMSA), Beijing, China.\newline E-mail: wyl@bimsa.cn}}

\begin{document}
\maketitle

\begin{abstract}
We study the realization of the reverse braided category
$\overline{\mathcal{M}_V}$ of the module category of a rational vertex
operator algebra $V$. Using holomorphic extensions and commutant
constructions, we prove that this realization property is preserved
under suitable cosets, tensor products, extensions, and finite-group
orbifolds. As applications, we establish the property for unitary
Virasoro minimal models, affine vertex operator algebras of types
$ABCDE$ at positive integral levels, the corresponding
principal $W$-algebras of type $ADE$, and related parafermion vertex
operator algebras.
\end{abstract}

\thispagestyle{empty}


\section{Introduction}
 One of the central themes in the theory of vertex operator algebras is the relationship between the structure of a vertex operator algebra and the tensor-categorical properties of its module category.

Let $V$ be a rational, $C_2$-cofinite, self-dual vertex operator
algebra of CFT type. Under these hypotheses, the category
$\mathcal{M}_V$ of $V$-modules has the structure of a modular tensor
category \cite{Huang 2}. Associated with any braided fusion category
$\mathcal{C}$ is its reverse category $\overline{\mathcal{C}}$, which
has the same underlying fusion category but whose braiding is given by
\[
   \overline{c}_{X,Y}=c_{Y,X}^{-1}.
\]
It is natural to ask whether the reverse of a vertex operator algebra
module category can again be realized as the module category of a
vertex operator algebra. More precisely, one may ask whether, for a
given rational vertex operator algebra $V$, there exists another
rational vertex operator algebra $U$ such that
\[
                \overline{\mathcal{M}_V}\simeq\mathcal{M}_U
\]
as braided fusion categories.

This question is a special case of the general reconstruction problem
for modular tensor categories, which predicts that every modular tensor
category can be realized as the module category of a rational vertex
operator algebra. The reverse-category problem nevertheless has
additional structure coming from $V$ and can often be approached
directly through coset and extension constructions. In \cite{Dong 6}, it
was conjectured that such a realization exists whenever $V$ is
rational and $C_2$-cofinite of CFT type and every irreducible
$V$-module other than $V$ has positive conformal weight.

A characterization obtained in \cite{Dong 6} relates this
conjecture to holomorphic extensions. For strongly regular vertex operator algebras $V, U$,
$\overline{\mathcal{M}_V}$ and $\mathcal{M}_U$ are braided equivalent
if and only if there exists a holomorphic vertex operator algebra $W$
containing $V\otimes U$ as a conformal subalgebra such that
\[
                    C_W(V)=U,\qquad C_W(U)=V.
\]
Thus, the problem of realizing the reverse category can be reformulated
as the problem of constructing a pair of mutual commutants inside a
holomorphic vertex operator algebra. This characterization provides
the main starting point of the present paper.

Our first objective is to establish general methods for producing new
solutions from known ones. We show that the reverse-category realization property passes to
appropriate coset subalgebras satisfying the double-commutant
condition. We also show that the property is compatible with tensor
products. Finally, using the theory of local modules over commutative
algebra objects, we prove that it is preserved under suitable simple
extensions of vertex operator algebras. These results provide a
general mechanism for transferring the conjecture through conformal
embeddings, coset constructions, tensor products, and extensions.

Our second objective is to study the behavior of reverse categories
under orbifold constructions. Let $G$ be a finite group acting
faithfully on $V$, and assume that the fixed-point vertex operator
algebra $V^G$ satisfies the required rationality and positivity
conditions. The representation category of $V^G$ is naturally related
to an equivariantization of a fusion category associated with the
$G$-action. Different minimal modular extensions of this category are
measured by elements of $H^3(G,S^1)$. 
Using equivariantization, de-equivariantization, and the realization
of twisted group doubles by holomorphic orbifolds, we prove that,
under suitable regularity assumptions, a $G$-equivariant realization
of $\overline{\mathcal{M}_V}$ yields a rational vertex operator
algebra $W$ with a $G$-action such that
\[
\overline{\mathcal{M}_{V^G}}\simeq\mathcal{M}_{W^G}.
\]
Consequently, under the stated hypotheses, the reverse-category
realization property is preserved by taking finite-group fixed points.

We apply these general results to several important families of vertex
operator algebras. The conjecture was previously established for
positive-definite even lattice vertex operator algebras in \cite{Dong 6}.
Using lattice realizations, commutant constructions, and known
decompositions of lattice vertex operator algebras, we prove it for the
unitary Virasoro minimal-series vertex operator algebras
\[
              L(c_m,0),\qquad
              c_m=1-\frac{6}{(m+2)(m+3)}.
\]
We next use coset realizations, conformal embeddings, and level-rank
dualities to prove the conjecture for affine vertex operator algebras
of types $ABCDE$ types at all positive integral levels. The same methods also establish the
conjecture for the corresponding principal $W$-algebras of $ADE$ types.
Finally, the standard coset realization of parafermion vertex operator
algebras shows that the result for an affine vertex operator algebra
implies the corresponding result for its parafermion algebra.

The proofs illustrate that the realization of reverse categories is
closely connected with the classical dual-pair phenomena. Familiar
examples include the conformal inclusions
\[
 V_{A_1}\otimes V_{E_7}\subset V_{E_8},\qquad
 L\left(\frac12,0\right)\otimes L_{E_8}(2,0)
       \subset V_{E_8\oplus E_8},
\]
and
\[
             L_{F_4}(1,0)\otimes L_{G_2}(1,0)
             \subset V_{E_8}.
\]
In each case, the two factors form a pair whose module categories have
opposite braidings. The general results developed here place these
examples in a common framework and allow the construction of many
further realizations.

The paper is organized as follows. We first recall the necessary
background on vertex operator algebras, modular tensor categories,
commutative algebra objects, and equivariantization. We then develop
the general theory of reverse categories using holomorphic extensions,
commutants, tensor products, and extensions of vertex operator
algebras. The following section treats equivariantization and
finite-group orbifolds. In the final section, we apply the general
results to lattice, Virasoro, affine, $W$, and parafermion vertex
operator algebras.

\vspace{1em}
\noindent\textbf{Acknowledgements.}
We would like to thank Siu-Hung Ng for fruitful discussions. 
C.D.\ is partially supported by the Simons Foundation MPS-TSM-00008039.
L.R.\ is supported by NSFC Grant No.\ 12571029.
Y.W.\ is supported by NSFC Grants No.\ 12301045 and No.\ 12571041.

\section{Basics on vertex operator algebras and tensor categories}

In this section, we recall the basic notions and results from the theory of vertex operator algebras and tensor categories that will be used throughout the paper. We review modules and finiteness conditions for vertex operator algebras, as well as the relevant notions from fusion and modular tensor categories, including reverse categories, Drinfeld centers, algebra objects, and equivariantization. Our conventions follow \cite{Borcherds 1,Dong 2,Frenkel 3,Haisheng Li 1,Etingof 1,Etingof 2,Kirillov 1}.

Let $(V,Y,\mathbf{1},\omega)$ be a vertex operator algebra. We first recall the notions of weak, admissible, and ordinary $V$-modules. A weak $V$-module is a tuple $(M,Y_M)$, where $M$ is a vector space over $\mathbb{C}$ equipped with a linear map
\[
Y_M\colon V\longrightarrow \operatorname{End}(M)[[z,z^{-1}]],\qquad
v\longmapsto Y_M(v,z)=\sum_{n\in\mathbb{Z}}v_nz^{-n-1},
\]
where $v_n\in\operatorname{End}(M)$ for all $n\in\mathbb{Z}$. These data satisfy the following conditions:
\begin{enumerate}[label=(\arabic*)]
\item The truncation condition: for any $v\in V$ and $w\in M$, one has $v_nw=0$ for $n\gg 0$.

\item The vacuum property: $Y_M(\mathbf{1},z)=\Id_M$.

\item The Jacobi identity: for any $u,v\in V$,
\begin{align*}
&z_0^{-1}\delta\left(\frac{z_1-z_2}{z_0}\right)Y_M(u,z_1)Y_M(v,z_2)
-z_0^{-1}\delta\left(\frac{z_2-z_1}{-z_0}\right)Y_M(v,z_2)Y_M(u,z_1)\\
&\hspace{3cm}
=z_2^{-1}\delta\left(\frac{z_1-z_0}{z_2}\right)
Y_M\bigl(Y(u,z_0)v,z_2\bigr).
\end{align*}
\end{enumerate}

An admissible $V$-module is a weak $V$-module $M=\bigoplus_{n=0}^{\infty}M(n)$ satisfying
\[
u_mM(n)\subset M(\wt u-m-1+n)
\]
for homogeneous $u\in V$ and $m,n\in\mathbb{Z}$. An (ordinary) $V$-module is a weak $V$-module $M=\bigoplus_{\lambda\in\mathbb{C}}M_\lambda$ such that, for every $\lambda\in\mathbb{C}$,
\[
M_\lambda=\{w\in M\mid L(0)w=\lambda w\},\qquad
\dim M_\lambda<\infty,\qquad
M_{\lambda+n}=0\quad\text{for }n\ll 0,\ n\in\mathbb{Z}.
\]
It is straightforward to verify that every ordinary module is admissible.

Let $M=\bigoplus_{n=0}^{\infty}M(n)$ be an admissible $V$-module. The contragredient module of $M$ is defined by $M'=\bigoplus_{n=0}^{\infty}M(n)^{\ast}$, where $M(n)^{\ast}=\Hom_{\mathbb{C}}(M(n),\mathbb{C})$. For $v\in V$, the vertex operator $Y_{M'}(v,z)$ is defined by
\[
\bigl\langle Y_{M'}(v,z)f,w\bigr\rangle
=
\left\langle f,
Y_M\left(e^{zL(1)}(-z^{-2})^{L(0)}v,z^{-1}\right)w
\right\rangle,
\]
where $\langle f,w\rangle=f(w)$ is the natural pairing $M'\times M\to\mathbb{C}$. It follows from \cite{Frenkel 2, Xu 1} that $(M',Y_{M'})$ is an admissible $V$-module. Moreover, if $M$ is an ordinary $V$-module, then $M$ is irreducible if and only if $M'$ is irreducible. A vertex operator algebra $V$ is said to be self-dual if $V'$ is isomorphic to $V$ as a $V$-module.

A vertex operator algebra $V$ is said to be rational if every admissible $V$-module is a direct sum of simple admissible $V$-modules. A rational vertex operator algebra $V$ is called holomorphic if it has a unique simple module, namely, the adjoint module $V$. A vertex operator algebra $V$ is $C_2$-cofinite \cite{Zhu 1} if $V/C_2(V)$ is finite-dimensional, where
\[
C_2(V)=\langle v_{-2}u\mid v,u\in V\rangle.
\]
A vertex operator algebra $V$ is said to be regular if every weak $V$-module is a direct sum of simple ordinary $V$-modules \cite{Dong 5}.

A vertex operator algebra $V=\bigoplus_{n\in\mathbb{Z}}V_n$ is said to be of CFT type if $V_n=0$ for every $n<0$ and $V_0=\mathbb{C}\mathbf{1}$. It follows from \cite{Abe 1, Haisheng Li 3} that if $V$ is of CFT type, then $V$ is regular if and only if $V$ is rational and $C_2$-cofinite. The following well-known results concerning rational vertex operator algebras are proved in \cite{Dong 4,Dong 10}.

If a vertex operator algebra $V$ is rational, then the following statements hold:
\begin{enumerate}[label=(\arabic*)]
\item Every irreducible admissible $V$-module $M$ is an ordinary $V$-module. Moreover, there exists $\lambda\in\mathbb{C}$ such that
\[
M=\bigoplus_{n=0}^{\infty}M_{\lambda+n},
\qquad M_\lambda\ne 0.
\]
The number $\lambda$ is called the conformal weight of $M$.

\item There are only finitely many irreducible admissible $V$-modules up to isomorphism.

\item If $V$ is also $C_2$-cofinite, then the central charge $c$ of $V$ and the conformal weight $\lambda$ of any irreducible $V$-module $M$ are rational numbers.
\end{enumerate}

A simple vertex operator algebra is called {\it strongly regular} if $V$ is regular, of CFT type, and the conformal weight of any irreducible $V$-module is positive except $V$ itself.
We now assume that $V$ is strongly regular. We recall from \cite{Dong 16,Dong 15} the definition of the quantum dimension.
Let $M=\bigoplus_{n=0}^{\infty}M_{\lambda+n}$ be an irreducible
$V$-module of conformal weight $\lambda$. The character of $M$ is defined by
\[
\chi_M(\tau)
=
q^{-c/24}\sum_{n=0}^{\infty}
\dim M_{\lambda+n}\,q^{\lambda+n},
\]
where $q=e^{2\pi i\tau}$ and $\tau$ lies in the upper half-plane. The function
$\chi_M(\tau)$ is a modular function on a congruence subgroup
\cite{Dong 17,Dong 18,Zhu 1}. 
The quantum dimension of $M$ over $V$ is
defined by
\[
\operatorname{qdim}_V(M)
=
\lim_{y\to 0^+}
\frac{\chi_M(iy)}{\chi_V(iy)}
\]
with $y > 0$. This quantum dimension is always a positive algebraic number greater than or equal to $1$.

We next briefly review fusion categories and modular tensor categories, following \cite{Etingof 1,Etingof 2,Kirillov 1}.

A fusion category $\mathcal{C}$ is a $\mathbb{C}$-linear, abelian, semisimple,
rigid monoidal category with finite-dimensional morphism spaces, finitely
many isomorphism classes of simple objects and a simple unit. Its monoidal structure consists
of a tensor product functor
$\boxtimes\colon\mathcal{C}\times\mathcal{C}\to\mathcal{C}$ and a unit object
$\mathbf{1}_{\mathcal{C}}$, together with associativity and unit constraints
satisfying the standard axioms. We use $X'$ to denote the left dual of an
object $X\in\mathcal{C}$, and $\mathcal{O}(\mathcal{C})$ to denote the set of
isomorphism classes of simple objects of $\mathcal{C}$. Throughout this paper,
all subcategories of $\mathcal{C}$ are assumed to be full. By a fusion
subcategory of $\mathcal{C}$, we mean a full tensor subcategory that is itself
a fusion category.

A very useful concept is the so called Frobenius-Perron dimension.
Let $K_0(\mathcal{C})$ be the Grothendieck ring of a fusion category
$\mathcal{C}$. Then there is a unique ring homomorphism
$\operatorname{FPdim}:K_0(\mathcal{C})\to\mathbb{R}$ satisfying
$\operatorname{FPdim}(M)\geq 1$ for any nonzero object $M$. The
Frobenius-Perron dimension of $\mathcal{C}$ is defined to be
$\operatorname{FPdim}(\mathcal{C})=\sum_{M\in\mathcal{O}(\mathcal{C})}
\operatorname{FPdim}(M)^2$. In the case $\mathcal{C}$ is a fusion
subcategory of the module category of a rational, $C_2$-cofinite vertex operator algebra $V$, the
Frobenius-Perron dimension $\operatorname{FPdim}(M)$ is exactly the quantum
dimension $\operatorname{qdim}_{V}(M)$.

Let $\mathcal{C}$ be a fusion category, and let
$\operatorname{Aut}_{\otimes}(\mathcal{C})$ denote the category of tensor
autoequivalences of $\mathcal{C}$. For a finite group $G$, let
$\operatorname{Cat}(G)$ denote the monoidal category whose objects are the
elements of $G$, whose only morphisms are identities, and whose tensor product
is induced by multiplication in $G$. An action of $G$ on $\mathcal{C}$ is a
monoidal functor
$T\colon\operatorname{Cat}(G)\to\operatorname{Aut}_{\otimes}(\mathcal{C})$.
We also say in this situation that $G$ acts on $\mathcal{C}$. Write
$T_g$ for the action corresponding to $g$. For $g,h\in G$, let
$\gamma_{g,h}\colon T_g\circ T_h\xrightarrow{\sim}T_{gh}$ denote the
isomorphism defining the monoidal structure of $T$. 
A $G$-equivariant object of $\mathcal{C}$ is a pair $(X,u)$ consisting of an
object $X\in\mathcal{C}$ and a family of isomorphisms
$u=\{u_g\colon T_g(X)\xrightarrow{\sim}X\mid g\in G\}$ satisfying
\[
u_g\circ T_g(u_h)
=
u_{gh}\circ\gamma_{g,h}(X)
\qquad\text{for all }g,h\in G.
\]
A morphism of equivariant objects is a morphism in $\mathcal{C}$ commuting
with $u_g$ for every $g\in G$. The category of $G$-equivariant objects of
$\mathcal{C}$, called the $G$-equivariantization of $\mathcal{C}$, is denoted
by $\mathcal{C}^G$ and is a fusion category.

A braided fusion category is a fusion category $\mathcal{C}$ equipped with a
natural family of isomorphisms
$c_{X,Y}\colon X\boxtimes Y\to Y\boxtimes X$, called a braiding, satisfying
the hexagon axioms. Let $\operatorname{Vec}$ denote the category of
finite-dimensional complex vector spaces with the usual tensor product and
braiding. Then $\operatorname{Vec}$ is a braided fusion category. Associated
with a braided fusion category $\mathcal{C}$ is its reverse category
$\overline{\mathcal{C}}$, which has the same underlying fusion category as
$\mathcal{C}$ and braiding $\overline{c}_{X,Y}=c_{Y,X}^{-1}$. A braided fusion
category $\mathcal{C}$ is called symmetric if
$c_{Y,X}\circ c_{X,Y}=\operatorname{id}_{X\boxtimes Y}$ for all
$X,Y\in\mathcal{C}$. For example, the category of finite-dimensional
representations of a finite group $G$, denoted by $\operatorname{Rep}(G)$, is a symmetric fusion category.

For any collection $\mathcal{D}$ of objects in $\mathcal{C}$, its M\"uger
centralizer $C_{\mathcal{C}}(\mathcal{D})$ is the full subcategory of
$\mathcal{C}$ consisting of objects $Y$ such that
$c_{Y,X}\circ c_{X,Y}=\operatorname{id}_{X\boxtimes Y}$ for every
$X\in\mathcal{D}$. The subcategory $C_{\mathcal{C}}(\mathcal{D})$ is closed
under the tensor product of $\mathcal{C}$ and is therefore a braided fusion
subcategory of $\mathcal{C}$. The symmetric fusion category
$C_{\mathcal{C}}(\mathcal{C})$ is called the M\"uger center of $\mathcal{C}$
and is denoted by $\mathcal{C}'$. 

A braided fusion category $\mathcal{C}$ is called nondegenerate if
$\mathcal{C}'$ is braided equivalent to $\operatorname{Vec}$. A nondegenerate
spherical braided fusion category is called a modular tensor category. This
definition is equivalent to the nonsingularity of the corresponding
$S$-matrix \cite{Muger 1}.

The Drinfeld center $\mathcal{Z}(\mathcal{C})$ of a fusion category
$\mathcal{C}$ is the braided fusion category whose objects are pairs
$(X,z_{X,-})$, where $X\in\mathcal{C}$ and
$z_{X,-}\colon X\boxtimes(-)\to(-)\boxtimes X$ is a natural isomorphism,
called a half-braiding, satisfying the standard compatibility conditions.

Let $\mathcal{C}$ be a braided fusion category, and set
$\mathcal{E}=\mathcal{C}'$. In this case, we call $\cC$ a nondegenerate braided fusion category over $\cE$.
A modular extension (ME) of $\mathcal{C}$ is a
pair $(\mathcal{D},\iota_{\mathcal{D}})$, where $\mathcal{D}$ is a modular
tensor category and
$\iota_{\mathcal{D}}\colon\mathcal{C}\to\mathcal{D}$ is a full and faithful
braided tensor functor, which will simply be called an embedding. The modular
extension $(\mathcal{D},\iota_{\mathcal{D}})$ is called minimal (MME) if
$C_{\mathcal{D}}(\mathcal{E})=\mathcal{C}$, where
$\iota_{\mathcal{D}}(\mathcal{C})$ is identified with $\mathcal{C}$. 
We denote the collection of equivalence classes of minimal modular extensions of $\cC$ by $\operatorname{MME}(\cC)$.
When no confusion can arise, we identify $\mathcal{C}$ with
$\iota_{\mathcal{D}}(\mathcal{C})$ and refer to $\mathcal{D}$ itself as a
modular extension of $\mathcal{C}$, and we will consider modular extensions up to equivalences (see \cite{LanKongWen}).

A regular commutative algebra in a braided fusion category $\mathcal{C}$ is a triple $(A, \mu, \eta)$ where $A$ is an object of $\mathcal{C}$, $\mu\colon A\boxtimes A\to A$ and $\eta\colon\mathbf{1}_{\mathcal{C}}\to A$ are morphisms in $\cC$ satisfying
\begin{align*}
\mu\circ(\mu\boxtimes\operatorname{id}_A)\circ\alpha_{A,A,A}
   &=\mu\circ(\operatorname{id}_A\boxtimes\mu),\\
\mu\circ(\eta\boxtimes\operatorname{id}_A)\circ l_A^{-1}
   &=\operatorname{id}_A
    =\mu\circ(\operatorname{id}_A\boxtimes\eta)\circ r_A^{-1},\\
\mu&=\mu\circ c_{A,A},
\qquad
\dim\operatorname{Hom}(\mathbf{1}_{\mathcal{C}},A)=1.
\end{align*}
Here
$\alpha_{A,A,A}\colon A\boxtimes(A\boxtimes A)\to
(A\boxtimes A)\boxtimes A$ is the associativity isomorphism,
$l_A\colon\mathbf{1}_{\mathcal{C}}\boxtimes A\to A$ and
$r_A\colon A\boxtimes\mathbf{1}_{\mathcal{C}}\to A$ are the left and right
unit isomorphisms, respectively, and
$c_{A,A}\colon A\boxtimes A\to A\boxtimes A$ is the braiding isomorphism.

A left $A$-module $N$ is an object of $\mathcal{C}$ equipped with a morphism
$\mu_N\colon A\boxtimes N\to N$ satisfying
\[
\mu_N\circ(\mu\boxtimes\operatorname{id}_N)\circ\alpha_{A,A,N}
=
\mu_N\circ(\operatorname{id}_A\boxtimes\mu_N), \quad \mu_N\circ(\eta\bt\operatorname{id}_N) = l_N.
\]
We denote the category of left $A$-modules by $\mathcal{C}_A$. For
$N_1,N_2\in\mathcal{C}_A$, define
$N_1\boxtimes_A N_2=(N_1\boxtimes N_2)/\operatorname{Im}(\mu_1-\mu_2)$,
where
\[
\mu_1=\mu_{N_1}\boxtimes\operatorname{id}_{N_2},
\qquad
\mu_2=(\operatorname{id}_{N_1}\boxtimes\mu_{N_2})
\circ(c_{A,N_1}\boxtimes\operatorname{id}_{N_2}).
\]
We also require a regular commutative algebra $A$ to be separable, i.e., $\mu$ splits as an $A$-bimodule morphism.
Then $\mathcal{C}_A$ is a fusion category with tensor product
$\boxtimes_A$. An $A$-module $N$ is called local if
$\mu_N\circ c_{N,A}\circ c_{A,N}=\mu_N$. We denote the category of local
$A$-modules by $\mathcal{C}_A^0$. Then $\mathcal{C}_A^0$ is a braided fusion
category. Moreover, if $\mathcal{C}$ is a modular tensor category, then so is
$\mathcal{C}_A^0$.

The local module construction is particularly important in the study of minimal modular extensions. Let $\cE$ be a symmetric fusion category, then $L_{\cE} := \bigoplus_{X \in \cO(\cE)} X \bt X^*\in \cE \bt \cE$ is a condensable algebra, and we have a braided equivalence $\cE \cong (\cE \bt \cE)_{L_{\cE}}^{0}$. For any modular extensions $(\cM, \iota_\cM)$ and $(\cN, \iota_\cN)$ of $\cE$, define $A_{\cE}(\iota_{\cM}, \iota_{\cN}) := (\iota_{\cM} \bt \iota_{\cN})(L_{\cE})$, then we have an embedding 
\[\iota_\cM \bt^0_{\cE} \iota_\cN: \cE \cong (\cE \bt \cE)_{L_{\cE}}^{0} \xhookrightarrow{{\iota_{\cM} \bt \iota_{\cN}}} (\cM \bt \cN)_{A_{\cE}(\iota_{\cM}, \iota_{\cN})}^{0}\,,\]
and it is easy to check that the pair
\[
(\cM, \iota_\cM) \bt_{\cE}^{0} (\cN, \iota_\cN) := ((\cM \bt \cN)_{A_{\cE}(\iota_{\cM}, \iota_{\cN})}^{0}, \iota_\cM \bt^0_{\cE} \iota_\cN)
\]
is a modular extension of $\cE$. It is shown in \cite{LanKongWen} that $\bt^0_{\cE}$ endows the collection of equivalence classes of all modular extensions of $\cE$ with the structure of a commutative monoid. Moreover, it is proved in loc.\ cit.\ that $\bt_{\cE}^{0}$ makes $\operatorname{MME}(\cE)$ a finite group, and that $\operatorname{MME}(\cC)$, when nonempty, is an $\operatorname{MME}(\cE)$-torsor for any nondegenerate braided fusion category $\cC$ over $\cE$.

Let $\mathcal{D}$ be a fusion category, $G$ be a finite group,
 and let $\mathbb{C}[G]^{\ast} \in \Rep(G)$ be the dual $G$-module of $\mathbb{C}[G]$. Suppose that there is a braided tensor functor
$\operatorname{Rep}(G)\to\mathcal{Z}(\mathcal{D})$ whose composition with the
forgetful functor $\mathcal{Z}(\mathcal{D})\to\mathcal{D}$ is fully faithful. Let $A$ be the image of the algebra $\mathbb{C}[G]^{\ast} \in \Rep(G)$ in $\mathcal{D}$. Let $\mathcal{C} :=\mathcal{D}_A$ be the category of $A$-modules in $\mathcal{D}$. Then $\mathcal{C}$ is a fusion category and there exists an action of $G$ on $\mathcal{C}$ such that $\mathcal{D}\cong\mathcal{C}^G$. The fusion category $\mathcal{C}$ is called the de-equivariantization of $\mathcal{D}$ and is denoted by $\mathcal{D}_G$. The term ``de-equivariantization'' is justified by the fact that the assignments $\mathcal{C}\mapsto\mathcal{C}^G$ and
$\mathcal{D}\mapsto\mathcal{D}_G$ are mutually inverse.

Let $\mathcal{C}$ be a braided fusion category. An action
$g\mapsto T_g$ of $G$ on $\mathcal{C}$ is called braided if every $T_g$ is a
braided autoequivalence of $\mathcal{C}$. If $\mathcal{C}$ is equipped with
a braided action of $G$, then the braiding of $\mathcal{C}$ extends to
$\mathcal{C}^G$, making $\mathcal{C}^G$ a braided fusion category.
Conversely, the de-equivariantization of a braided fusion category
$\mathcal{D}$ by a subcategory
$\operatorname{Rep}(G)\subset\mathcal{D}'$ is a braided fusion category. 
Let $\mathcal C$ and $\mathcal D$ be braided fusion categories with $G$-actions $T^{\mathcal C}$ and $T^{\mathcal D}$. A braided equivalence $F:\mathcal C\to\mathcal D$ is called
$G$-equivariant if it is equipped with tensor natural isomorphisms
\[
\eta_g:F\circ T_g^{\mathcal C}\xrightarrow{\sim}T_g^{\mathcal D}\circ F
\]
for all $g\in G$, compatible with the multiplication and unit constraints of the two actions.
We say that $\mathcal C$ and $\mathcal D$ are $G$-equivariantly braided equivalent if such an equivalence exists.

Let $V$ be a rational, $C_2$-cofinite, self-dual vertex operator algebra of
CFT type. Then the category $\mathcal{M}_V$ of $V$-modules is a modular tensor
category \cite{Huang 2}.
The following conjecture proposed in \cite{Dong 6} is the main motivation of this paper.

\begin{conj}\label{conj 1030}
Let $V$ be a rational, $C_2$-cofinite vertex operator algebra of CFT type such that the conformal weight of any irreducible $V$-module is positive except itself, then there exists a rational $C_2$-cofinite vertex operator algebra $U$ such that $\overline{\mathcal{M}_{V}}$ and $\mathcal{M}_U$ are braided equivalent.
\end{conj}

This conjecture is consistent with the reconstruction program, which predicts that every modular tensor category $\mathcal{C}$ can be realized as $\mathcal{M}_W$ for some rational vertex operator algebra $W$. In particular, if $V$ is a holomorphic vertex operator algebra, then $\mathcal{M}_V$ is braided equivalent to the category of finite-dimensional vector spaces ${\rm Vec}$. Since the braiding on ${\rm Vec}$ is symmetric, we have  $\overline{\mathcal{M}_{V}}\cong \mathcal{M}_V$ as braided fusion categories. Throughout this paper, we call $\overline{\mathcal M_V}$ realizable if there exists a strongly regular vertex operator algebra $U$ such that $\overline{\mathcal M_V}$ and $\mathcal M_U$ are braided equivalent.

\section{Reverse categories for subalgebras and extensions}

In this section, we study the realization of reverse categories associated with the module categories of vertex operator subalgebras and extension vertex operator algebras.
Starting from a rational vertex operator algebra whose reverse module category admits a realization by another vertex operator algebra, we investigate how this realization behaves under coset constructions and extensions.
We first consider conformal inclusions and show that, under suitable rationality and finiteness assumptions, the realization property passes to the corresponding coset vertex operator algebras. We then study extensions and establish conditions under which the reverse category of the module category of an extension can again be realized as the module category of a vertex operator algebra. We also discuss compatibility with tensor products and holomorphic extensions. These results demonstrate that the realization of reverse categories is stable under several fundamental constructions in the theory of vertex operator algebras and provides tools for producing new realizations from known ones.




Let $W=(W, Y, \mathbf{1}, \omega)$ be a vertex operator algebra of CFT type and $U=(U, Y, \mathbf{1}, \omega^1)$ be a simple vertex operator subalgebra of $W$ such that $L(1)\omega^1=0$. The commutant $C_W(U)$ is defined to be 
$$C_W(U)=\{w\in W|u_nw=0, u\in U, n\ge 0\}.$$
Set $\omega^2=\omega-\omega^1$. Then $(C_W(U), Y, \mathbf{1}, \omega^2)$ is a vertex operator subalgebra of $W$ of CFT type and $U\otimes C_W(U)$ is a conformal subalgebra of $W$ \cite{Frenkel 1,Goddard 1}. Here is a characterization of $\overline{\mathcal{M}_{V}}$ obtained in \cite{Dong 6}.

\begin{thm}\label{thm 1743}
Let $V,U$ be strongly regular vertex operator algebras. Then $\overline{\mathcal{M}_{V}}$ and $\mathcal{M}_U$ are braided equivalent if and only if there exists a holomorphic vertex operator algebra $W$ such that $V\otimes U$ is a conformal subalgebra of $W$ satisfying $C_W(V)=U$ and $C_W(U)=V$.
\end{thm}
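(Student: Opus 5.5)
The plan is to prove the two directions separately, treating the inclusion $V\otimes U\subset W$ as a commutative algebra object in $\mathcal{M}_{V\otimes U}\simeq \mathcal{M}_V\boxtimes\mathcal{M}_U$. We use two dictionaries throughout. The first is the Huang--Kirillov--Lepowsky correspondence between vertex operator algebra extensions of a strongly regular $T$ and regular (haploid, separable) commutative algebras in $\mathcal{M}_T$. Under it, modules of the extension correspond to local modules. The second is the standard fact (Müger, Davydov--Müger--Nikshych--Ostrik) that for modular $\mathcal{C},\mathcal{D}$, a braided equivalence $\overline{\mathcal{C}}\simeq\mathcal{D}$ is the same as a Lagrangian algebra in $\mathcal{C}\boxtimes\mathcal{D}$ of the special form $\bigoplus_{X\in\mathcal{O}(\mathcal{C})}X\boxtimes \phi(X)'$.

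\emph{($\Leftarrow$)} Suppose $W$ is holomorphic, $V\otimes U\subset W$ is conformal, and $C_W(V)=U$, $C_W(U)=V$. Since $V\otimes U$ is strongly regular and $W$ is a rational extension, decompose
\[
W=\bigoplus_{i,j} N_{ij}\,M^i\otimes N^j ,
\]
where $M^i$ runs over the irreducible $V$-modules and $N^j$ over the irreducible $U$-modules. The commutant conditions give $N_{0j}=\delta_{0j}$ and $N_{i0}=\delta_{i0}$; for instance, the $V$-vacuum isotypic part of $W$ is $C_W(V)=U$. We now invoke the result of Lin / Creutzig--Kanade--McRae on mirror extensions, which is the key input. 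For a commutative algebra $A=W$ in $\mathcal{M}_V\boxtimes\mathcal{M}_U$ with both one-sided vacuum components trivial, the multiplicities define a bijection $i\mapsto \tau(i)$ with $N_{ij}=\delta_{j,\tau(i)}$. Moreover, $\tau$ extends to a braid-reversing equivalence $\mathcal{M}_V\to\mathcal{M}_U$, possibly composed with the contragredient functor. Since $W$ is holomorphic, we have $(\mathcal{M}_V\boxtimes\mathcal{M}_U)^0_W\simeq\mathrm{Vec}$, so $W$ is Lagrangian. The Lagrangian-algebra dictionary above then yields $\overline{\mathcal{M}_V}\simeq \mathcal{M}_U$ as braided fusion categories.

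\emph{($\Rightarrow$)} Given a braided equivalence $\phi\colon\overline{\mathcal{M}_V}\to\mathcal{M}_U$, form the canonical Lagrangian algebra
\[
A=\bigoplus_{X\in\mathcal{O}(\mathcal{M}_V)} X\boxtimes\phi(X)'
\]
in $\mathcal{M}_V\boxtimes\mathcal{M}_U$. This is the image of the canonical algebra $\bigoplus X\boxtimes X^\ast\in \mathcal{C}\boxtimes\overline{\mathcal{C}}\simeq\mathcal{Z}(\mathcal{C})$ under $\mathrm{id}\boxtimes\phi$. By the extension correspondence, $A$ gives a vertex operator algebra $W\supset V\otimes U$ with the same conformal vector. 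The module $W$ is $\mathbb{Z}$-graded because $X\boxtimes\phi(X)'$ has trivial twist, since $\theta_{\phi(X)}=\theta_X^{-1}$. It is of CFT type, since the positivity of conformal weights of nonvacuum modules makes all summands other than $V\otimes U$ have positive weight. It is holomorphic because $A$ is Lagrangian, so its local module category is $\mathrm{Vec}$. Finally, the $V$-vacuum isotypic part of $W$ is $\mathbf{1}\boxtimes\phi(\mathbf{1})'=U$. Hence $C_W(V)=U$, using that $C_W(V)$ equals the $V$-invariant multiplicity space $\mathrm{Hom}_V(V,W)$ for a conformal subalgebra. Symmetrically, $C_W(U)=V$.

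The main obstacle is the ($\Leftarrow$) step that upgrades the bijection $\tau$ from the multiplicity data to an honest braided (reverse) equivalence. This is exactly where the mirror-extension theorem is needed. If we prefer to avoid citing it, we would instead argue directly. Holomorphicity gives that $W$ is Lagrangian in $\mathcal{M}_V\boxtimes\mathcal{M}_U$. Lagrangian algebras in $\mathcal{C}\boxtimes\mathcal{D}$ whose intersections with $\mathcal{C}\boxtimes\mathbf{1}$ and $\mathbf{1}\boxtimes\mathcal{D}$ are trivial correspond, by the DMNO classification, to braided equivalences $\overline{\mathcal{C}}\simeq\mathcal{D}$. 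The trivial-intersection hypothesis is precisely $C_W(V)=U$ and $C_W(U)=V$.
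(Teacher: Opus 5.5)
The paper does not prove this statement; it imports it from \cite{Dong 6}, so there is no in-paper argument to compare yours with. Judged on its own, your route is the standard one: Huang--Kirillov--Lepowsky extension theory in $\cM_{V\otimes U}\simeq\cM_V\boxtimes\cM_U$, combined with Lagrangian algebras. It also matches the picture the paper relies on later in Theorem~\ref{thm 1652}, namely a holomorphic $W=\bigoplus_i V^i\otimes U^i$ pairing each irreducible of $V$ with exactly one irreducible of $U$. The argument is essentially correct, but two steps need tightening.

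\emph{($\Leftarrow$).} You state the mirror-extension input too strongly. The conditions $N_{0j}=\delta_{0j}$ and $N_{i0}=\delta_{i0}$ alone give only a multiplicity-one bijection $\tau\colon I\to J$ between the sets of $V$- and $U$-irreducibles that actually occur in $W$. This comes from full faithfulness of the two free-module functors into $(\cM_V\boxtimes\cM_U)_W$. The case $W=V\otimes U$ shows nothing more is true. Applying your ``special form'' dictionary at that point is also circular, since it presupposes $W\cong\bigoplus_X X\boxtimes\phi(X)'$ with $\phi$ already braided. Make your last paragraph the main line. Surjectivity of $\tau$ is then easy: with $d$ denoting quantum dimensions, holomorphy and Cauchy--Schwarz give $\FPdim(\cM_V)\FPdim(\cM_U)=\FPdim(W)^2=\bigl(\sum_{i\in I}d_id_{\tau(i)}\bigr)^2\le\sum_{i\in I}d_i^2\sum_{j\in J}d_j^2$. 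This forces $I$ and $J$ to exhaust all simples. The Davydov--Nikshych--Ostrik characterization of Lagrangian algebras with trivial intersections then upgrades $\tau$ to a braided equivalence $\overline{\cM_V}\simeq\cM_U$.

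\emph{($\Rightarrow$).} The identity $\theta_{\phi(X)}=\theta_X^{-1}$ is not automatic for a braided equivalence. A priori the twists could differ by a $\pm1$-valued character of the universal grading group, which would make $W$ only a vertex operator superalgebra. It holds here because strong regularity makes all quantum dimensions positive. Hence both ribbon structures are the canonical pseudo-unitary ones, and $\phi$ preserves them.

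With these two points supplied, plus Theorems~\ref{thm 1736} and~\ref{thm 1759} for rationality of $W$, the remaining checks go through. These are the $\mathbb{Z}$-grading, CFT type, holomorphy, and $C_W(V)=U$, $C_W(U)=V$ via vacuum-like vectors.
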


We remark that if Conjecture \ref{conj 1030} holds with $U$ strongly regular,
then Theorem \ref{thm 1743} implies that every strongly regular
vertex operator algebra can be embedded into a holomorphic
vertex operator algebra.

Recall that a vertex operator algebra $U$ is called an extension of a vertex operator algebra $V$ if $V$ is a conformal subalgebra of $U$. We will also need
the following results about extensions.

\begin{thm}\label{thm 1736}\cite{Abe 1}
Let $V$ be a rational, $C_2$-cofinite vertex operator algebra and $U$ be an extension of $V$, then $U$ is $C_2$-cofinite.
\end{thm}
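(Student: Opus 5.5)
The plan is to show directly that $U/C_2(U)$ is finite-dimensional, using the module theory of $V$. This is Abe's argument cited in Theorem~\ref{thm 1736}, and I would reproduce its skeleton. Since $V$ is a conformal subalgebra of $U$, the space $U$ is a weak $V$-module via restriction of $Y$, and $L(0)$ acts on it with finite-dimensional eigenspaces (it is the same $L(0)$ as for $U$). The first step uses rationality of $V$, in the form that every such $\mathbb{N}$-gradable module is completely reducible into irreducible ordinary modules, to decompose
\[
U=\bigoplus_{i} M^i\otimes \Hom_V(M^i,U),
\]
where $M^1,\dots,M^k$ are the finitely many irreducible $V$-modules.

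The second step shows that each multiplicity space is finite-dimensional. Grading gives this: $M^i$ has lowest weight $\lambda_i$, so its multiplicity injects into $U_{\lambda_i}$, which is finite-dimensional. Hence $U$ is a finite direct sum of irreducible $V$-modules.

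The third step uses $C_2$-cofiniteness of $V$: every irreducible $V$-module $M$ is $C_2$-cofinite, i.e.\ $M/C_2(M)$ is finite-dimensional, where $C_2(M)=\langle v_{-2}w\mid v\in V, w\in M\rangle$. This is known for rational $C_2$-cofinite $V$, since irreducible modules are quotients of generalized Verma-type modules generated by their finite-dimensional top level, and a spanning-set argument \`a la Gaberdiel--Neitzke/Buhl applies. Summing over the finitely many summands, $U/C_2^V(U)$ is finite-dimensional, where $C_2^V(U)$ is spanned by $v_{-2}u$ with $v\in V$ and $u\in U$. Since $C_2^V(U)\subseteq C_2(U)$, we get $\dim U/C_2(U)<\infty$.

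I expect the main obstacle to be the third step, the $C_2$-cofiniteness of irreducible $V$-modules. If it is not available off the shelf, I would instead use Li's result that $M/C_2(M)$ is finite-dimensional for modules generated by finitely many vectors over a $C_2$-cofinite VOA, applied to each cyclic irreducible $M^i$.
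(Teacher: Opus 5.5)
Your proof is correct. The paper gives no proof of its own and simply cites the result, and your argument is essentially the standard one behind that citation: rationality together with the finite-dimensional weight spaces of $U$ makes $U$ a finite direct sum of irreducible $V$-modules, each such module is $C_2$-cofinite by the Buhl-type spanning set for finitely generated modules over a $C_2$-cofinite $V$, and the inclusion $C_2^V(U)\subseteq C_2(U)$ then gives $\dim U/C_2(U)<\infty$.
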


\begin{thm}\label{thm 1759}\cite{Huang 1}
Let $V$ be a strongly regular vertex operator algebra, and $U$ be a simple vertex operator algebra of CFT type such that $U$ is an extension of $V$. Then $U$ is rational and the conformal weight of any irreducible $U$-module is positive except itself.
\end{thm}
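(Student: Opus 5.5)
This is Huang's theorem, cited from \cite{Huang 1}. The plan is to transfer rationality from $V$ to $U$ through the tensor category of $V$-modules. Three steps, in order:
\begin{enumerate}[label=(\arabic*)]
\item Turn the extension into a commutative algebra object.
\item Show that $U$-modules are semisimple.
\item Read off the positivity of conformal weights.
\end{enumerate}

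\textbf{Step (1).} Since $V$ is strongly regular, every weak $V$-module is a direct sum of simple ordinary $V$-modules, so $U$ decomposes as $U=\bigoplus_{M\in\mathcal O(\mathcal M_V)} M\otimes \Hom_V(M,U)$. Because $U$ is of CFT type, $U_0=\mathbb{C}\mathbf{1}$. Hence $V$ occurs in $U$ with multiplicity one, since any copy of $V$ is generated by a vacuum-like vector of weight $0$. Because $\mathcal M_V$ is a modular tensor category \cite{Huang 2}, by the Huang--Kirillov--Lepowsky correspondence the vertex operator $Y_U$ restricted to $V$-submodules gives intertwining operators. These assemble into a morphism $\mu\colon U\boxtimes U\to U$ in $\mathcal M_V$. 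Associativity and skew-symmetry of $Y_U$ translate into associativity and commutativity $\mu=\mu\circ c_{U,U}$. So $U$ becomes a commutative algebra in $\mathcal M_V$ with $\dim\Hom(\mathbf 1,U)=1$. Simplicity of $U$ gives that $U$ is a simple algebra. Rigidity, semisimplicity and nonzero dimensions in $\mathcal M_V$ then yield separability: any connected commutative algebra in a modular category with positive-dimension objects (here $\dim U=\qdim_V U>0$) is separable by the Kirillov--Ostrik argument.

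\textbf{Step (2).} Under the same correspondence, weak $U$-modules on which $V$ acts as an ordinary, hence semisimple, module correspond to $U$-modules in $\mathcal M_V$; untwisted $U$-modules correspond exactly to the local modules $(\mathcal M_V)_U^0$. First, any weak, or admissible, $U$-module is by regularity of $V$ a direct sum of ordinary $V$-modules. This needs the standard fact that $V$ regular forces the restriction to be a direct sum of irreducibles. So it lies in the ind-completion of $(\mathcal M_V)_U^0$. Separability makes $(\mathcal M_V)_U$, and hence $(\mathcal M_V)_U^0$, semisimple. Therefore every admissible $U$-module is completely reducible, i.e.\ $U$ is rational. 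By Theorem \ref{thm 1736}, $U$ is also $C_2$-cofinite.

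\textbf{Step (3).} An irreducible $U$-module $N\neq U$ is, as a $V$-module, a finite direct sum of irreducible $V$-modules $M$. Locality forces all $V$-summands to have conformal weights congruent mod $\mathbb Z$, and $N$ contains no copy of $V$: a copy of $V$ would contain a vacuum-like vector, which generates a $U$-submodule isomorphic to $U$, contradicting irreducibility and $N\ne U$. Every other irreducible $V$-module has positive weight, so the lowest weight of $N$ is positive.

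The main obstacle is Step (2), specifically the passage from weak or admissible $U$-modules to objects of $\mathcal M_V$. One must show that grading-truncation and finite-dimensionality hold, so that the module lies in the category where Huang's tensor theory applies. I would first try the finite generation argument: a finitely generated weak $U$-module is generated by finitely many $V$-irreducibles, and Huang's construction of the $U$-action via $\mu_N$ applies there. Then I would pass to general modules by taking unions.
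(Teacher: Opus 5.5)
\textbf{Overall.} The paper gives no proof of this statement; it quotes it from \cite{Huang 1}. Your outline is the standard argument behind that citation: the extension is a haploid commutative algebra in $\mathcal M_V$, untwisted $U$-modules are local modules, semisimplicity comes from Kirillov--Ostrik, and positivity comes from the absence of vacuum-like vectors. Steps (1) and (3) are sound as far as they go.

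\textbf{The gap.} It is the one you flag yourself in Step (2), and your proposed fix does not close it. Knowing that a finitely generated weak $U$-module $N$ is generated by finitely many irreducible $V$-submodules does not show that $N$ is an object of $\mathcal M_V$, i.e.\ a \emph{finite} direct sum of irreducible $V$-modules. Regularity of $V$ only makes $N$ a possibly infinite direct sum. Until $N\in\mathcal M_V$ is known, there is no morphism $\mu_N\colon U\boxtimes N\to N$ to use, because the universal property of $\boxtimes$ and the Huang--Kirillov--Lepowsky correspondence are formulated for objects of $\mathcal M_V$. Step (3) silently depends on the same point, since it needs an irreducible $U$-module to have finite length over $V$.

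\textbf{How to fill it.} The missing input is a finiteness lemma, which follows from finiteness of fusion rules for $V$.
\begin{enumerate}[label=(\roman*)]
\item Write $U=\bigoplus_i M_i$, a finite sum of irreducible $V$-modules by Step (1). Let $S=W_1\oplus\cdots\oplus W_r\subset N$ with each $W_j$ irreducible over $V$.
\item The $U$-submodule generated by $S$ is $\operatorname{span}\{u_nx\mid u\in U,\ x\in S,\ n\in\mathbb Z\}$. This is the sum of the spans $J_{ij}$ of the coefficients of the intertwining operators $Y_N|_{M_i\otimes W_j}$ of type $\binom{N}{M_i\; W_j}$.
\item Each $J_{ij}$ is a $V$-submodule by the commutator formula.
\item Suppose an irreducible $X$ occurs in $J_{ij}$ through copies indexed by a set $B$. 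The corresponding projections composed with $Y_N|_{M_i\otimes W_j}$ are linearly independent intertwining operators of type $\binom{X}{M_i\; W_j}$: a vanishing finite combination kills the coefficient span, which is all of $J_{ij}$. Hence $|B|\le N^X_{M_i,W_j}<\infty$, and so $\langle S\rangle_U\in\mathcal M_V$.
\end{enumerate}
Alternatively, invoke Theorem \ref{thm 1736} \emph{before} Step (2), and use that finitely generated weak modules over a $C_2$-cofinite vertex operator algebra are grading-restricted.

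\textbf{Finishing the argument.} With the lemma, every cyclic submodule $\langle w\rangle_U$ is a local $U$-module in $\mathcal M_V$. By semisimplicity of $(\mathcal M_V)_U^0$, it is a finite direct sum of irreducible $U$-modules. An arbitrary weak $U$-module is the sum of its cyclic submodules, hence a sum of irreducible submodules, hence a direct sum of some of them. This gives rationality (indeed regularity) and the finite $V$-length needed in Step (3).

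\textbf{Minor point.} Non-local $U$-modules in $\mathcal M_V$ correspond to twisted-type modules, not to weak $U$-modules; only the local ones correspond to weak $U$-modules. Those are all you actually use.
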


Now, we can prove the following result which will be useful later.

\begin{lem}\label{lem 1738}
Let $V,U,W$ be strongly regular vertex operator algebras such that $V\otimes U$ is a conformal subalgebra of $W$. Then $C_W(V)$ is an extension of $U$ and is strongly regular.
\end{lem}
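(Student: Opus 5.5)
We need to show two things: that $C_W(V)$ is a conformal extension of $U$, and that it is strongly regular, meaning simple, of CFT type, rational, $C_2$-cofinite, and with positive conformal weights on nontrivial irreducibles.

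\emph{Step 1: $C_W(V)$ contains $U$ and has the same conformal vector.} Write $\omega=\omega^V+\omega^U$ for the conformal vector of $W$; this decomposition holds because $V\otimes U$ is a conformal subalgebra. Elements of $U$ commute with $V$ inside $V\otimes U\subset W$, so $v_nu=0$ for $v\in V$, $u\in U$ and $n\ge 0$. Hence $U\subset C_W(V)$. By the discussion preceding Theorem \ref{thm 1743}, $C_W(V)$ is a vertex operator algebra of CFT type with conformal vector $\omega-\omega^V=\omega^U$. Thus $U$ is a conformal subalgebra of $C_W(V)$, i.e.\ $C_W(V)$ is an extension of $U$. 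CFT type is inherited since $C_W(V)_0\subset W_0=\mathbb{C}\mathbf{1}$ and the grading of $C_W(V)$ by $L^U(0)=L(0)-L^V(0)$ is nonnegative. Here one has to check that on $C_W(V)$ the operator $L^V(0)$ acts as $0$; this holds because $\omega^V_1 w=0$ for $w\in C_W(V)$.

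\emph{Step 2: simplicity.} This is the step I expect to require the most care. Since $W$ is strongly regular and $V\otimes U$ is regular, $W$ decomposes as a finite direct sum $\bigoplus_{i,j} m_{ij}\,M^i\otimes N^j$ of irreducible $V\otimes U$-modules. Then $C_W(V)$ consists of the components with $M^i=V$: for a vector of $L^V(0)$-weight $0$ annihilated by $v_n$ ($n\ge0$) we need the vacuum-like space of $M^i$, and by positivity of conformal weights of nontrivial $V$-modules this forces $M^i\cong V$. Hence $C_W(V)\cong V^{\ast\text{-vac}}\otimes\bigoplus_j m_{0j}N^j$, so as a $U$-module $C_W(V)=\bigoplus_j m_{0j}N^j$.

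For simplicity I would use the standard fact that a CFT-type vertex operator algebra $A$ is simple if $\dim\operatorname{Hom}_A(\mathbf{1}\text{-type vectors})$ is one, i.e.\ $A$ is simple when $A$ is self-dual and $\Hom_A(A,A')$ is one-dimensional. A cleaner route: $W$ is simple, and $C_W(V)$ is the multiplicity space of $V$ in $W$, viewed as $\Hom_V(V,W)$. An ideal $I$ of $C_W(V)$ generates the subspace $V\cdot I$ of $W$, and $W\cdot I$ is an ideal of $W$. By the associativity and skew symmetry of $W$, the $V$-vacuum component of $W\cdot I$ should be $I$ itself (this is the Dong--Mason/Kac--Radul type argument). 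Simplicity of $W$ then gives $I=0$ or $I=C_W(V)$. I would verify this $V$-isotypic-projection step carefully.

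\emph{Step 3: remaining regularity properties.} By Theorem \ref{thm 1736} applied to the rational $C_2$-cofinite $U$ and its extension $C_W(V)$, the algebra $C_W(V)$ is $C_2$-cofinite. Since $C_W(V)$ is simple, of CFT type, and an extension of the strongly regular $U$, Theorem \ref{thm 1759} gives that it is rational and that every irreducible module other than itself has positive conformal weight. Combined with \cite{Abe 1, Haisheng Li 3}, rationality and $C_2$-cofiniteness give regularity. Hence $C_W(V)$ is strongly regular.
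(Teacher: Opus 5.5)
Your argument is essentially the paper's: $U\subset C_W(V)$ with the same conformal vector, CFT type inherited from $W$, $C_2$-cofiniteness from Theorem~\ref{thm 1736}, and rationality and positivity of conformal weights from Theorem~\ref{thm 1759}. The only difference is that the paper simply cites \cite[Lemma 2.1, Proposition 2.5]{Dong 8} for the simplicity of $C_W(V)$, while you sketch the standard ideal argument, and the step you flagged does go through: since $[v_n,c_m]=0$ for $v\in V$, $c\in C_W(V)$, each multiplicity space $W^i=\Hom_V(M^i,W)$ is a $C_W(V)$-module, skew symmetry gives $W\cdot I\subseteq\bigoplus_i M^i\otimes(I\cdot W^i)$, and the intersection of this with $C_W(V)=\mathbf{1}\otimes W^0$ is $I$, so if $I\neq 0$ the simplicity of $W$ gives $\mathbf{1}\in W\cdot I\cap C_W(V)=I$.
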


\begin{proof}
Clearly, $U$ is a subalgebra of $C_W(V)$ with the same conformal vector. Thus, $C_W(V)$ is an extension of $U$. Since $W$ is of CFT type, by definition, $C_W(V)$ is also of CFT type. Since $U$ is rational and $C_2$-cofinite, Theorem \ref{thm 1736} implies that $C_W(V)$ is $C_2$-cofinite. On the other hand, by \cite[Lemma 2.1, Proposition 2.5]{Dong 8}, $C_W(V)$ is simple. Thus, by Theorem \ref{thm 1759}, $C_W(V)$ is rational, and the conformal weight of every irreducible $C_W(V)$-module other than $C_W(V)$ itself is positive.
\end{proof}




\begin{lem}\label{lem 1928}
Let $V^1,V^2, \cdots, V^k$ be strongly regular vertex operator algebras. Then $V^1\otimes V^2\otimes \cdots \otimes V^k$ is a strongly regular vertex operator algebra.
\end{lem}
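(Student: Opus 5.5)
By induction on $k$ it suffices to treat $k=2$. Since $V^1\otimes\cdots\otimes V^k=(V^1\otimes\cdots\otimes V^{k-1})\otimes V^k$ as vertex operator algebras, once the two-factor case is known the general case follows. So let $V=V^1\otimes V^2$ with $V^1,V^2$ strongly regular. We have to check four properties: simplicity, CFT type, regularity (equivalently rationality plus $C_2$-cofiniteness, by \cite{Abe 1, Haisheng Li 3}), and positivity of the conformal weights of the nontrivial irreducible modules.

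\emph{Simplicity and CFT type.} The grading is $V_n=\bigoplus_{i+j=n}V^1_i\otimes V^2_j$. Since both factors vanish in negative degree and have one-dimensional degree zero, we get $V_n=0$ for $n<0$ and $V_0=\mathbb{C}\mathbf{1}\otimes\mathbf{1}$, so $V$ is of CFT type. Simplicity of a tensor product of simple vertex operator algebras is standard (\cite{Frenkel 2}). Alternatively, one can argue directly: an ideal $I$ is a $V^1$-submodule of $V^1\otimes V^2$, so it has the form $V^1\otimes J$, since $V^1$ is simple and $\Hom_{V^1}(V^1,V^1)=\mathbb{C}$. Moreover $J$ is an ideal of $V^2$, hence $J=0$ or $J=V^2$.

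\emph{Regularity.} Here I would invoke the result of Dong--Li--Mason \cite{Dong 5}: a tensor product of regular vertex operator algebras is regular, and every irreducible $V$-module has the form $M^1\otimes M^2$ with $M^i$ an irreducible $V^i$-module. If one prefers to check $C_2$-cofiniteness separately, note that $C_2(V)\supset C_2(V^1)\otimes V^2+V^1\otimes C_2(V^2)$. Indeed $(a\otimes\mathbf{1})_{-2}(b\otimes c)=a_{-2}b\otimes c$, and similarly in the second factor. Hence $V/C_2(V)$ is a quotient of $V^1/C_2(V^1)\otimes V^2/C_2(V^2)$, which is finite-dimensional. Rationality likewise follows from the tensor decomposition of modules in \cite{Frenkel 2, Dong 5}.

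\emph{Positivity.} This step needs the classification of irreducibles, which is the main input. Let $M=M^1\otimes M^2$ be an irreducible $V$-module. Its conformal weight is $\lambda_1+\lambda_2$, where $\lambda_i$ is the conformal weight of $M^i$. By strong regularity each $\lambda_i\ge 0$, with equality only if $M^i\cong V^i$. So if $M\not\cong V$, at least one $M^i\not\cong V^i$, and the weight is $\lambda_1+\lambda_2>0$. The only real obstacle is ensuring that the irreducible modules of the tensor product are exactly the tensor products of irreducibles. This is precisely the cited Frenkel--Huang--Lepowsky / Dong--Li--Mason theorem, which I would quote rather than reprove.
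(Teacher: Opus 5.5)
Your proof is correct and follows essentially the same route as the paper: CFT type is immediate, regularity of the tensor product is quoted from \cite[Proposition 3.3]{Dong 5}, and positivity follows because every irreducible module of the tensor product is a tensor product of irreducible modules of the factors \cite{Frenkel 2}. Your additions are also correct and make the argument more complete: the reduction to $k=2$, the direct $C_2$-cofiniteness check, and the explicit verification of simplicity, which the definition of strong regularity requires but the paper's proof leaves implicit.
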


\begin{proof}
Since $V^1,V^2, \cdots, V^k$ are vertex operator algebras of CFT type, it is clear that $V^1\otimes V^2\otimes \cdots \otimes V^k$ is of CFT type. It follows from \cite[Theorem 4.5]{Abe 1} that $V^1,V^2, \cdots, V^k$ are regular. By \cite[Proposition 3.3]{Dong 5}, $V^1\otimes V^2\otimes \cdots \otimes V^k$ is regular, rational, and $C_2$-cofinite. On the other hand, any irreducible $V^1\otimes V^2\otimes \cdots \otimes V^k$-module is isomorphic to a tensor product of irreducible $V^i$-modules \cite{Frenkel 2}. Therefore, the conformal weight of every irreducible $V^1\otimes V^2\otimes \cdots \otimes V^k$-module other than $V^1\otimes V^2\otimes \cdots \otimes V^k$ itself is positive.
\end{proof}

The following result shows that if $V$ is regular and $\overline{\mathcal{M}_{V}}$ is realizable, then for a regular subalgebra $V^1$ satisfying some mild conditions, $\overline{\mathcal{M}_{V^1}}$ is realizable.


\begin{prop}\label{prop 2043} Assume that $V$ is a strongly regular vertex operator algebra and $\overline{\mathcal{M}_{V}}$ is realizable. Let $V^1$ be a strongly regular vertex operator subalgebra of $V$ such that $C_V(V^1)$ is strongly regular and $C_V(C_V(V^1))=V^1$, then both $\overline{\mathcal{M}_{V^1}}$ and $\overline{\mathcal{M}_{C_V(V^1)}}$ are realizable.


\end{prop}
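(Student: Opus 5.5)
The plan is to reduce everything to Theorem \ref{thm 1743}. Since $\overline{\mathcal{M}_{V}}$ is realizable, there is a strongly regular $U$ with $\overline{\mathcal{M}_V}\simeq\mathcal{M}_U$. Theorem \ref{thm 1743} then gives a holomorphic vertex operator algebra $W$ containing $V\otimes U$ as a conformal subalgebra, with $C_W(V)=U$ and $C_W(U)=V$. Set $V^2=C_V(V^1)$. The goal is to show that $V^1$ and $V^2\otimes U$ are mutual commutants in $W$, and symmetrically that $V^2$ and $V^1\otimes U$ are mutual commutants. Theorem \ref{thm 1743} will then give
\[
\overline{\mathcal{M}_{V^1}}\simeq \mathcal{M}_{V^2\otimes U},\qquad
\overline{\mathcal{M}_{V^2}}\simeq \mathcal{M}_{V^1\otimes U}.
\]
By Lemma \ref{lem 1928}, $V^1\otimes V^2\otimes U$, $V^2\otimes U$ and $V^1\otimes U$ are strongly regular, so these are realizations in the required sense.

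The first step is conformality. The hypothesis $C_V(C_V(V^1))=V^1$, together with strong regularity of $V^1$ and $V^2$, should mean that $V^1\otimes V^2$ is a conformal subalgebra of $V$, i.e.\ $\omega_V=\omega^1+\omega^2$. This is the standard commutant setup, with $\omega^2=\omega_V-\omega^1$ and $L(1)\omega^1=0$ implicit in the hypotheses. Since $\omega_W=\omega_V+\omega_U$, it follows that $V^1\otimes V^2\otimes U$ is conformal in $W$. In particular $V^1\otimes(V^2\otimes U)$ is a conformal subalgebra of $W$ with all factors strongly regular.

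The second step computes the commutants directly. For $C_W(V^2\otimes U)$, take $w\in W$ annihilated by $x_n$ for all $x\in V^2\otimes U$ and $n\ge0$. In particular $u_nw=0$ for all $u\in U$, $n\ge 0$, so $w\in C_W(U)=V$. Also $a_nw=0$ for $a\in V^2$, $n\ge 0$, so $w\in C_V(V^2)=C_V(C_V(V^1))=V^1$. Conversely, $V^1$ commutes with $V^2$ by definition and with $U$ because $V^1\subset V=C_W(U)$, together with skew symmetry. Hence $C_W(V^2\otimes U)=V^1$.

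The main obstacle is the reverse commutant, $C_W(V^1)=V^2\otimes U$. The inclusion $\supseteq$ is clear. For $\subseteq$, I would use the decomposition of $W$ as a $V\otimes U$-module. By the mirror extension/commutant theory for holomorphic $W$, one has $W=\bigoplus_{i} M^i\otimes N^i$, where the $M^i$ are distinct irreducible $V$-modules, $M^0=V$, $N^0=U$, and $N^i$ corresponds to $(M^i)'$ under the equivalence. The commutant $C_W(V^1)$ consists of the vectors that are vacuum-like for $V^1$. In each summand these form $C_{M^i}(V^1)\otimes N^i$, where $C_{M^i}(V^1)$ is the space of $V^1$-vacuum-like vectors in $M^i$. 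This commutant is a vertex algebra with conformal vector $\omega^2+\omega_U$, and by Lemma \ref{lem 1738} it is strongly regular and an extension of $V^2\otimes U$. The claim is that $C_{M^i}(V^1)=0$ for $i\ne0$. Such a vector generates a $V^1\otimes V^2$-submodule of $M^i$ of the form $V^1\otimes X$. Since $V^1\otimes V^2\subset V$ is a conformal extension with double commutant property, the condensable algebra $A=V$ in $\mathcal{M}_{V^1}\boxtimes\mathcal{M}_{V^2}$ has $V^1\otimes X$ appearing only in $A$ itself; equivalently, $\Hom_{V^1}(V^1,M^i)\neq0$ would force $M^i$ to contain $V$ by locality and simplicity. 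Hence $C_W(V^1)=C_V(V^1)\otimes U=V^2\otimes U$.

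The symmetric statement for $V^2$ follows by swapping the roles of $V^1$ and $V^2$. This uses $C_V(V^2)=V^1$ and $C_V(V^1)=V^2$, both of which hold by hypothesis.
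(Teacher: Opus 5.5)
\textbf{The gap is in your last step.} You claim $C_{M^i}(V^1)=0$ for $i\ne 0$, i.e.\ $C_W(V^1)=V^2\otimes U$. This is false in general, and so is your conclusion $\overline{\mathcal{M}_{V^1}}\simeq\mathcal{M}_{V^2\otimes U}$.

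The double-commutant hypothesis only says that, inside $V$ itself, the only summand of the form $V^1\otimes X$ is $V^1\otimes V^2$. It gives no control over the other irreducible $V$-modules $M^i$. These can contain $V^1\otimes X$ for a nontrivial $V^2$-module $X$: inducing $V^1\otimes X$ up to $V$ gives a local $V$-module whenever $X$ has trivial monodromy with all $V^2$-components of $V$.

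Here is a concrete instance satisfying all your hypotheses. Take $V=L_{\widehat{\mathfrak{sl}}_2}(1,0)^{\otimes 2}=V_{A_1\oplus A_1}$, with $V^1=L_{\widehat{\mathfrak{sl}}_2}(2,0)$ embedded diagonally and $V^2=C_V(V^1)=L(\frac12,0)$. Then $V\cong L_{\widehat{\mathfrak{sl}}_2}(2,0)\otimes L(\frac12,0)\oplus L_{\widehat{\mathfrak{sl}}_2}(2,2)\otimes L(\frac12,\frac12)$, so $C_V(C_V(V^1))=V^1$. Take $U=V_{E_7\oplus E_7}$ and $W=V_{E_8\oplus E_8}$.

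The irreducible $V$-module $L_{\widehat{\mathfrak{sl}}_2}(1,1)^{\otimes 2}\cong L_{\widehat{\mathfrak{sl}}_2}(2,0)\otimes L(\frac12,\frac12)\oplus L_{\widehat{\mathfrak{sl}}_2}(2,2)\otimes L(\frac12,0)$ occurs in $W$, tensored with a nonzero $U$-module. It therefore contributes $V^1$-vacuum-like vectors, so $C_W(V^1)\supsetneq V^2\otimes U$. In fact $\FPdim(\mathcal{M}_{V^1})=1+2+1=4$, while $\FPdim(\mathcal{M}_{V^2\otimes U})=4\cdot 4=16$, so $\overline{\mathcal{M}_{V^1}}\not\simeq\mathcal{M}_{V^2\otimes U}$.

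\textbf{The fix.} You do not need to identify $C_W(V^1)$ at all; use $C_W(V^1)$ itself as the realizer.
\begin{enumerate}
\item Your second step is correct and gives $C_W(V^2\otimes U)=V^1$.
\item Since $V^2\otimes U\subseteq C_W(V^1)$, this yields $C_W(C_W(V^1))\subseteq V^1$. The reverse inclusion is automatic, so $V^1$ and $C_W(V^1)$ are mutual commutants in the holomorphic $W$.
\item Lemma \ref{lem 1738}, applied to the conformal subalgebra $V^1\otimes(V^2\otimes U)\subset W$ (strongly regular by Lemma \ref{lem 1928}), shows that $C_W(V^1)$ is a strongly regular extension of $V^2\otimes U$.
\item Theorem \ref{thm 1743} then gives $\overline{\mathcal{M}_{V^1}}\simeq\mathcal{M}_{C_W(V^1)}$. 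The same argument with $C_W(V^2)$ handles $V^2$.
\end{enumerate}
This is precisely the paper's argument. The realizer there is $C_W(V^1)$, which is in general a proper extension of $C_V(V^1)\otimes U$.
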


\begin{proof} 
Let $U$ be a strongly regular vertex operator algebra realizing
$\overline{\mathcal M_V}$. By Theorem \ref{thm 1743}, there exists a regular and holomorphic vertex operator algebra $W$ such that $V\otimes U$ is a conformal subalgebra of $W$ satisfying $C_W(V)=U$ and $C_W(U)=V$. Since $V^1 \otimes C_V(V^1)$ is a conformal subalgebra of $V$ and $V\otimes U$ is a conformal subalgebra of $W$, we have $V^1 \otimes C_V(V^1)\otimes U$ is a conformal subalgebra of $W$ and $C_V(V^1)\otimes U$ is a conformal subalgebra of $C_W(V^1)$. Meanwhile, $C_W(C_W(V^1))\otimes C_W(V^1)$ is a conformal subalgebra of $W$. Thus, $C_W(C_W(V^1))\otimes C_V(V^1)\otimes U$ is a conformal subalgebra of $W$. Since $C_W(U)=V$, $C_W(C_W(V^1))\otimes C_V(V^1)$ is a conformal subalgebra of $V$. Since $C_V(C_V(V^1))=V^1$, $C_W(C_W(V^1))$ is a conformal subalgebra of $V^1$. Therefore, $C_W(C_W(V^1))=V^1$. By Lemma \ref{lem 1738} and Lemma \ref{lem 1928}, $C_W(V^1)$ is strongly regular. Thus, by Theorem \ref{thm 1743}, $\overline{\mathcal{M}_{V^1}}$ and $\mathcal{M}_{C_W(V^1)}$ are braided equivalent.
\end{proof}

\begin{coro}
Assume that $V$ is a strongly regular vertex operator algebra and $\overline{\mathcal{M}_{V}}$ is realizable. Let $V^1$ be a strongly regular vertex operator subalgebra of $V$ such that $C_V(V^1)$ is strongly regular, then $\overline{\mathcal{M}_{C_V(V^1)}}$ is realizable.
\end{coro}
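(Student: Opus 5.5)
The idea is to apply Proposition \ref{prop 2043} with the roles of $V^1$ and its commutant interchanged. Set $V^2 := C_V(V^1)$. The corollary drops the hypothesis $C_V(C_V(V^1))=V^1$. However, the commutant of a commutant is always its own double commutant: $C_V(C_V(V^2)) = V^2$. This is the standard triple-commutant identity, which follows formally from the definition of commutant as an annihilator condition.

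The key steps run as follows.

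First, verify that $V^2$ is a strongly regular subalgebra of $V$ of the type allowed in Proposition \ref{prop 2043}. This is given by hypothesis. The conformal vector of $V^2$ is $\omega-\omega^1$, and it satisfies $L(1)(\omega-\omega^1)=0$.

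Second, show that $C_V(V^2)$ is strongly regular. Put $\tilde V^1 := C_V(V^2) = C_V(C_V(V^1))$. It contains $V^1$ and has the same conformal vector $\omega^1$, because $V^1\otimes V^2$ is a conformal subalgebra of $V$ and $\tilde V^1\otimes V^2$ is as well. So $V^1\otimes V^2 \subset \tilde V^1\otimes V^2\subset V$ are conformal inclusions. Since $V^1$ and $V^2$ are strongly regular, Lemma \ref{lem 1928} shows that $V^1\otimes V^2$ is strongly regular. Lemma \ref{lem 1738}, applied with the roles $(V,U,W)=(V^2,V^1,V)$, then shows that $C_V(V^2)$ is an extension of $V^1$ and is strongly regular.

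Third, check the double commutant condition $C_V(C_V(V^2))=V^2$. The inclusion $V^2\subset C_V(\tilde V^1)$ is clear, since $V^2$ commutes with $\tilde V^1$ by definition. Conversely, $V^1\subset \tilde V^1$ gives $C_V(\tilde V^1)\subset C_V(V^1)=V^2$. Hence $C_V(C_V(V^2)) = V^2$.

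Finally, Proposition \ref{prop 2043}, applied to the subalgebra $V^2$ of $V$, yields that $\overline{\mathcal{M}_{V^2}} = \overline{\mathcal{M}_{C_V(V^1)}}$ is realizable.

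The main obstacle is the second step, namely that $C_V(C_V(V^1))$ is strongly regular without assuming it equals $V^1$. Two points need care there. One must ensure it is a genuine vertex operator subalgebra with conformal vector $\omega^1$ satisfying $L(1)\omega^1=0$, which is inherited from $V^1$. One must also make sure Lemma \ref{lem 1738} applies: its hypothesis that the ambient algebra $W$ is strongly regular is met because $V$ is strongly regular. The third step is purely formal.
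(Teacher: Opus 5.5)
Your proposal is correct and is essentially the paper's proof. You apply Proposition~\ref{prop 2043} to the subalgebra $C_V(V^1)$, use Lemma~\ref{lem 1738} for the strong regularity of $C_V(C_V(V^1))$, and invoke the triple-commutant identity $C_V(C_V(C_V(V^1)))=C_V(V^1)$; the only difference is that the paper cites this identity from the literature, whereas you verify it directly from $V^1\subset C_V(C_V(V^1))$ and the symmetry of mutual commutation (which rests on skew symmetry).
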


\begin{proof}
By Lemma \ref{lem 1738}, $C_V(C_V(V^1))$ is strongly regular. It follows from \cite{Frenkel 3} that $C_V(V^1)=C_V(C_V(C_V(V^1)))$. Thus, by Proposition \ref{prop 2043}, $\overline{\mathcal{M}_{C_V(V^1)}}$ is realizable.
\end{proof}

The following lemma is evident.

\begin{lem}\label{lem 2037}
Let $V^i,U^i$ be strongly regular vertex operator algebras such that $\overline{\mathcal{M}_{V^i}}$ and $\mathcal{M}_{U^i}$ are braided equivalent for $i=1,2$. Then $\overline{\mathcal{M}_{V^1\otimes V^2}}$ and $\mathcal{M}_{U^1\otimes U^2}$ are braided equivalent. 
\end{lem}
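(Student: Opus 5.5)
The plan is to work directly at the level of braided tensor categories, using the standard identification of $\mathcal{M}_{V^1\otimes V^2}$ with the Deligne product $\mathcal{M}_{V^1}\boxtimes\mathcal{M}_{V^2}$. By Lemma \ref{lem 1928}, $V^1\otimes V^2$ and $U^1\otimes U^2$ are strongly regular, so both module categories are modular tensor categories \cite{Huang 2}. The irreducible modules of a tensor product are exactly the tensor products $M^1\otimes M^2$ of irreducible $V^i$-modules \cite{Frenkel 2}. From this one has a braided equivalence $\mathcal{M}_{V^1\otimes V^2}\simeq\mathcal{M}_{V^1}\boxtimes\mathcal{M}_{V^2}$, under which:
\begin{itemize}
\item the fusion product of $M^1\otimes M^2$ with $N^1\otimes N^2$ is $(M^1\boxtimes N^1)\otimes(M^2\boxtimes N^2)$;
\item the braiding is the tensor product of the braidings, since the monodromy of a product intertwining operator $\mathcal{Y}^1\otimes\mathcal{Y}^2$ is the product of the monodromies.
\end{itemize}
I would take this as the standard result on tensor product vertex operator algebras. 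The same statement holds for $U^1\otimes U^2$.

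Second, I would observe that reversing commutes with Deligne products: $\overline{\mathcal{C}\boxtimes\mathcal{D}}=\overline{\mathcal{C}}\boxtimes\overline{\mathcal{D}}$ as braided fusion categories. Indeed, the underlying fusion categories agree. The braiding on the left is $(c_{Y_1,X_1}\boxtimes c_{Y_2,X_2})^{-1}=c_{Y_1,X_1}^{-1}\boxtimes c_{Y_2,X_2}^{-1}$, which is the braiding on the right. Also, a braided equivalence $F\colon\mathcal{C}\to\mathcal{D}$ is equally a braided equivalence $\overline{\mathcal{C}}\to\overline{\mathcal{D}}$, and the Deligne product of braided equivalences $F_1\boxtimes F_2$ is again a braided equivalence.

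Combining these, we obtain
\[
\overline{\mathcal{M}_{V^1\otimes V^2}}\simeq\overline{\mathcal{M}_{V^1}}\boxtimes\overline{\mathcal{M}_{V^2}}\simeq\mathcal{M}_{U^1}\boxtimes\mathcal{M}_{U^2}\simeq\mathcal{M}_{U^1\otimes U^2}.
\]

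Alternatively, one can argue via Theorem \ref{thm 1743}. Take holomorphic $W^i\supset V^i\otimes U^i$ with mutual commutants. Then $W^1\otimes W^2$ is holomorphic, since by Lemma \ref{lem 1928} it is strongly regular and its only irreducible module is $W^1\otimes W^2$. Commutants in a tensor product split as tensor products, so $C_{W^1\otimes W^2}(V^1\otimes V^2)=U^1\otimes U^2$ and conversely.

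The only step requiring care is the identification of $\mathcal{M}_{V^1\otimes V^2}$ with the Deligne product as braided (not merely fusion) categories. In the alternative route, the corresponding step is the splitting of commutants. For the latter, I would decompose $W^1\otimes W^2$ and note that an element killed by all $v_n$ with $n\ge0$, for $v$ in $V^1\otimes\mathbf{1}$ and in $\mathbf{1}\otimes V^2$, lies in $C_{W^1}(V^1)\otimes C_{W^2}(V^2)$. This follows by choosing a basis of one tensor factor.
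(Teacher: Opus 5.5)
Your proposal is correct. The paper gives no argument here (it simply calls the lemma evident), so there is no written proof to match. The useful comparison is between your two routes.

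\textbf{First route.} This is the most direct argument, but all of its content sits in the imported braided monoidal equivalence $\mathcal{M}_{V^1\otimes V^2}\simeq\mathcal{M}_{V^1}\boxtimes\mathcal{M}_{V^2}$. That equivalence does not follow from the classification of irreducible modules alone, as your phrase ``from this one has'' suggests. You also need two further facts:
\begin{enumerate}
\item spaces of intertwining operators factor as tensor products;
\item the associativity isomorphisms factor, not only the fusion products and braidings.
\end{enumerate}
This is a known theorem (see e.g.\ Creutzig--Kanade--McRae, \emph{Gluing vertex algebras}). Once it is granted, the rest is formal, including your correct check that reversal commutes with Deligne products.

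\textbf{Second route.} This stays inside the paper's own toolkit: Theorem~\ref{thm 1743} in both directions, and Lemma~\ref{lem 1928} for the strong regularity of $V^1\otimes V^2$, $U^1\otimes U^2$ and $W^1\otimes W^2$. It mirrors how the paper argues in Proposition~\ref{prop 2043}.

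Its only new computation is the commutant splitting, which is elementary, but one basis is not quite enough. Take $w$ in the commutant and write $w=\sum_j a_j\otimes b_j$ with \emph{both} families linearly independent. The conditions from $V^1\otimes\mathbf{1}$ then force $a_j\in C_{W^1}(V^1)$, and those from $\mathbf{1}\otimes V^2$ force $b_j\in C_{W^2}(V^2)$. Choosing a basis of only one factor gives membership in $C_{W^1}(V^1)\otimes W^2$ or $W^1\otimes C_{W^2}(V^2)$, not both at once. The reverse inclusion is immediate from $Y(v\otimes v',z)(a\otimes b)=Y(v,z)a\otimes Y(v',z)b$.

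Note that this route repackages the Deligne-product input rather than avoiding it. The usual proof of the forward direction of Theorem~\ref{thm 1743} builds $W$ from the canonical algebra $\bigoplus_X X\boxtimes X^{*}$ in $\mathcal{M}_V\boxtimes\overline{\mathcal{M}_V}$. It then transports that algebra to $\mathcal{M}_{V\otimes U}$ through exactly the identification used in your first route.
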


Now we are ready to show that if $V$ is regular and $\overline{\mathcal{M}_{V}}$ is realizable, then $\overline{\mathcal{M}_{A}}$ is realizable if $A$ is a simple extension of $V$.

\begin{thm}\label{thm 1652}
Let $V,U$ be strongly regular vertex operator algebras. Assume that $\overline{\mathcal{M}_{V}}$ and $\mathcal{M}_U$ are braided equivalent and $W=\bigoplus_{i=0}^nV^i\otimes U^i$ is holomorphic, where $\{V^0=V,V^i|i=1,\cdots ,n\}$, $\{U^0=U,U^i|i=1,\cdots ,n\}$ are the sets of all inequivalent irreducible $V,U$-modules. Let $A=\bigoplus_{i=0}^n m_iV^i$ be a simple extension of $V$. Then $\overline{\mathcal{M}_{A}}$ and $\mathcal{M}_{B}$ are braided equivalent, where $B=\bigoplus_{i=0}^n m_i(U^i)'$.
\end{thm}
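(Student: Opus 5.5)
The plan is to use the local-module description of extensions together with the braided equivalence $F\colon\overline{\mathcal M_V}\to\mathcal M_U$, rather than trying to build a holomorphic $W'\supset A\otimes B$ by hand. First we fix the correspondence. Since $W=\bigoplus_i V^i\otimes U^i$ is holomorphic with $C_W(V)=U$ and $C_W(U)=V$, the standard mirror-extension argument gives a braid-reversing equivalence $\mathcal M_V\to\mathcal M_U$. The equivalence of Theorem \ref{thm 1743} sends $V^i\mapsto (U^i)'$: the regular algebra $W=\bigoplus V^i\otimes U^i$ in $\mathcal M_V\boxtimes\mathcal M_U$ is the canonical Lagrangian algebra $\bigoplus X\boxtimes F(X)'$. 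Thus $F(V^i)\cong (U^i)'$.

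Next we transport the algebra. By the theorem of Huang--Kirillov--Lepowsky, the simple extension $A$ of $V$ is a regular (haploid, separable, commutative) algebra in $\mathcal M_V$. Since $F$ is braided as a functor $\overline{\mathcal M_V}\to\mathcal M_U$, it sends $A$ to an algebra $F(A)=\bigoplus m_i (U^i)'$. Commutativity of $A$ in $\overline{\mathcal M_V}$ follows from commutativity in $\mathcal M_V$: the relation $\mu=\mu\circ c_{A,A}$ implies $\mu=\mu\circ c_{A,A}^{-1}$. Haploidness and separability are preserved. Applying Huang--Kirillov--Lepowsky again, $B:=F(A)$ carries the structure of a simple vertex operator algebra extension of $U$ with the stated decomposition. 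By Theorem \ref{thm 1736} and Theorem \ref{thm 1759}, $B$ is strongly regular, and $A$ is strongly regular by the same argument.

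Then we compare module categories. By Kirillov--Ostrik and Huang--Kirillov--Lepowsky, $\mathcal M_A\simeq(\mathcal M_V)_A^0$ and $\mathcal M_B\simeq(\mathcal M_U)_B^0$ as braided (modular) categories. The category of local modules depends only on the braided category and the algebra. Moreover, $(\overline{\mathcal C})_A^0=\overline{\mathcal C_A^0}$: locality $\mu_N c_{N,A}c_{A,N}=\mu_N$ is equivalent to the same identity with inverse braidings, since the double braiding acts as the identity on local modules. The induced braiding on $\boxtimes_A$ is likewise reversed. Hence $F$ induces braided equivalences
\[
\overline{\mathcal M_A}\simeq\bigl(\overline{\mathcal M_V}\bigr)^0_A\simeq(\mathcal M_U)^0_{F(A)}\simeq\mathcal M_B.
\]

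We expect the main obstacle to be the second step, namely identifying $F(A)$ with an actual vertex operator algebra structure on $\bigoplus m_i(U^i)'$. This needs the categorical-to-VOA direction of Huang--Kirillov--Lepowsky, in particular the twist condition $\theta_B=\mathrm{id}$. That condition holds because the conformal weights satisfy $h_{V^i}+h_{(U^i)'}\in\mathbb Z$ (each $V^i\otimes U^i\subset W$ has integral weights and $h_{(U^i)'}=h_{U^i}$), while the weights $h_{V^i}$ occurring in $A$ are integral. One must also check that the multiplicities and the identification $F(V^i)=(U^i)'$ are consistent with the chosen $F$; this is handled by the first step. As an alternative route, one could realize the extension directly inside $A\otimes B$-modules by taking the local-module condensation of $W$ along $A\boxtimes B$, but the categorical route above seems cleaner.
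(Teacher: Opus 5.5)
Your proposal is correct and follows essentially the paper's argument. Both identify $\overline{\mathcal M_A}\simeq(\overline{\mathcal M_V})^0_A$ via the Huang--Kirillov--Lepowsky local-module description, then transport $A$ along the braid-reversing equivalence determined by $W$ (sending $V^i\mapsto (U^i)'$) to get $(\mathcal M_U)^0_B\simeq\mathcal M_B$. The paper cites Lin's mirror-extension theorem for the fact that $B=\bigoplus m_i(U^i)'$ is a simple vertex operator algebra extension of $U$; you instead reprove this by pushing the algebra through $F$ and checking $\theta_B=\mathrm{id}$ from $h_{V^i}+h_{(U^i)'}\in\mathbb Z$, and you make explicit that the equivalence used must be the one determined by $W$, which the paper leaves implicit.
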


\begin{proof}
Since the conformal weight of any irreducible $V$-module other than $V$ itself is positive, it follows from the decomposition of $A$ as a $V$-module that $A$ is of CFT type. As $V$ is rational and $C_2$-cofinite, by Theorem \ref{thm 1736}, $A$ is $C_2$-cofinite. Thus, by Theorem \ref{thm 1759}, $A$ is rational, and the conformal weight of any irreducible $A$-module other than $A$ itself is positive. By \cite[Theorem 3.6]{Xingjun Lin 1}, $B$ is an extension vertex operator algebra of $U$ and is simple. Similar to the case of $A$, $B$ is also a rational, $C_2$-cofinite vertex operator algebra of CFT type, and the conformal weight of any irreducible $B$-module other than $B$ itself is positive. It follows from \cite{Huang 1} that $(\mathcal{M}_{V})_{A}^0$ is braided equivalent to $\mathcal{M}_{A}$, where by \cite{Pareigis 1}, the braiding of $(\mathcal{M}_{V})_{A}^0$ is inherited from $\mathcal{M}_{V}$. Thus, $\overline{\mathcal{M}_{A}}=\overline{(\mathcal{M}_{V})_{A}^0}=(\overline{\mathcal{M}_{V}})_{A}^0$ is braided equivalent to $({\mathcal{M}_{U}})_{B}^0={\mathcal{M}_{B}}$ by noting that the functor which gives the braided equivalence between $\overline{\mathcal{M}_{V}}$ and $\mathcal{M}_U$ sends the algebra $A$ to $B$. 
\end{proof}

\begin{rem}
Theorem \ref{thm 1652} has a direct analogue in the theory of conformal nets
 \cite[Proposition~4.1]{Bischoff2016}.
As explained in \cite[Remark~4.2]{Bischoff2016}, this is an instance
of mirror-extension construction \cite{Xu2007, Xingjun Lin 1}.
The common categorical mechanism is the transport of commutative
algebra objects through a reverse-braided equivalence, together
with the description of representation categories of extensions
as categories of local modules.

Related results in \cite{Bischoff2016} include the reverse-braided
equivalence for normal, cofinite subnets and their cosets inside
a holomorphic net, explicit unitary rational vertex operator algebra realizations of
the reverse of the $\mathrm{SU}(2)_k$ category for every positive
integer $k$, and those of the Drinfeld centers of the even-part
fusion categories of subfactors of index less than $4$.
\end{rem}

\section{Reverse categories for orbifolds}

This section studies the compatibility of reverse braided equivalences with
orbifold constructions. We show in Theorem \ref{thm:obfd-rev} that, under suitable regularity assumptions, a $G$-equivariant realization of $\overline{\mathcal M_V}$ yields an orbifold realization of $\overline{\mathcal M_{V^G}}$. The proof uses
equivariantization, minimal modular extensions, and holomorphic
realizations of twisted Drinfeld centers.

Let $V$ be a strongly regular vertex operator algebra. Assume that $G$ is a finite group acting faithfully on $V$ as automorphisms such that $V^G$ is strongly regular. 

Let $\cF_{V^{G}}$ be the subcategory of $\cM_{V^{G}}$ generated by $V^{G}$-submodules of $V$-modules. By \cite[Theorem 6.1, Theorem 6.3]{Dong 6}, $\cF_{V^{G}}$ is a braided fusion category, and $\cF_{V^{G}}'$ is braided equivalent to $\Rep(G)$. 
It follows from \cite[Theorem 6.4]{Dong 6} that $V \in \cF_{V^{G}}$ as a $V^{G}$-module can be identified as the algebra $\mathbb{C}[G]^{\ast} \in \Rep(G) \subset \cF_{V^{G}}$, which is the dual $G$-module of $\mathbb{C}[G]$. By definition, the de-equivariantization of $\cF_{V^{G}}$ with respect to $\Rep(G)$ is the category $(\cF_{V^{G}})_V$ of $V$-modules in $\cF_{V^{G}}$. Moreover, $(\cF_{V^{G}})_V = 
(\cF_{V^{G}})_{V}^{0}\subset (\cM_{V^{G}})_{V}^{0}$ because $C_{ \cM_{V^{G}}}(\cF_{V^{G}}) \cong \Rep(G)$. By \cite[Theorem 6.3]{Dong 6}, 
$$\FPdim((\cF_{V^{G}})_V)=\frac{\FPdim(\cF_{V^{G}})}{\qdim_{V^G} (V)}=\frac{\FPdim(\cM_{V^{G}})}{o(G)^2}=\FPdim((\cM_{V^{G}})_{V}^{0}).$$
Thus, $(\cF_{V^{G}})_V \cong (\cM_{V^{G}})_{V}^{0}$. By \cite{Huang 1}, $(\cF_{V^{G}})_V \cong (\cM_{V^{G}})_{V}^{0} \cong 
\cM_V$. Similarly, we have 
\[(\ol{\cF_{V^{G}}})_V \cong \ol{(\cF_{V^{G}})_V} \cong \ol{(\cM_{V^{G}})_{V}^{0}}\cong\ol{\cM_{V}}\]
via de-equivariantization. Therefore, by \cite[Theorem 4.44]{DGNO}, $\cF_{V^{G}}$ can be obtained from $\cM_{V}$ by equivariantization, and $\ol{\cF_{V^{G}}}$ can be obtained from $\ol{\cM_{V}}$ by equivariantization. In particular, both $\cM_V$ and $\ol{\cM_V}$ are braided fusion categories with braided $G$-actions induced from the $G$-action from $V$.

Recall the notion of $G$-equivariant braided equivalence in Section 2. We have the following lemma.

\begin{lem}\label{lem:1}
Let $V,U$ be strongly regular vertex operator algebras. Assume $G$ is a finite group acting faithfully on $V,U$ as automorphisms such that $V^G,U^G$ are strongly regular, and $\overline{\mathcal{M}_{V}}$ and $\mathcal{M}_U$ are $G$-equivariantly braided equivalent. Then $\ol{\cF_{V^{G}}} \cong \cF_{U^{G}}$.

\end{lem}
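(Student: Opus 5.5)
The plan is to deduce the lemma from the equivariantization description of $\cF_{V^G}$ and $\cF_{U^G}$ given just before it, together with the functoriality of equivariantization under $G$-equivariant braided equivalences. By the discussion preceding the lemma, applied to $V$ and to $U$, we have braided equivalences
\[
\cF_{V^G}\cong (\cM_V)^G,\qquad \ol{\cF_{V^G}}\cong (\ol{\cM_V})^G,\qquad \cF_{U^G}\cong (\cM_U)^G .
\]
Here the braided $G$-actions on $\cM_V$, $\ol{\cM_V}$ and $\cM_U$ are the ones induced from the actions of $G$ on $V$ and on $U$. Since reversing the braiding does not change the underlying fusion category or the tensor autoequivalences $T_g$, the action on $\ol{\cM_V}$ is the same as that on $\cM_V$. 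Each $T_g$ is still braided for the reversed braiding $\ol{c}_{X,Y}=c_{Y,X}^{-1}$, and $(\ol{\cM_V})^G=\ol{(\cM_V)^G}$ as braided categories.

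The first step is to check carefully that $\ol{\cF_{V^G}}\cong(\ol{\cM_V})^G$ holds as a braided equivalence. The text obtains it from $(\ol{\cF_{V^G}})_V\cong\ol{\cM_V}$ and \cite[Theorem 4.44]{DGNO}. Concretely, de-equivariantization by $\Rep(G)\subset\cF_{V^G}'$ commutes with reversing the braiding, because $\Rep(G)$ is symmetric and lies in the Müger center of both $\cF_{V^G}$ and $\ol{\cF_{V^G}}$.

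The second step is the key categorical fact. If $F:\cC\to\cD$ is a $G$-equivariant braided equivalence with structure isomorphisms $\eta_g:F\circ T^{\cC}_g\xrightarrow{\sim} T^{\cD}_g\circ F$, then
\[
(X,u)\longmapsto \bigl(F(X),\,\{F(u_g)\circ\eta_g(X)^{-1}\}_{g\in G}\bigr)
\]
defines a braided equivalence $\cC^G\to\cD^G$. To verify it, I will:
\begin{itemize}
\item check the equivariance cocycle condition for the new $u$'s, using the compatibility of $\eta$ with $\gamma$;
\item check that morphisms of equivariant objects go to morphisms of equivariant objects;
\item show that the functor is monoidal, using the tensor structure of $F$ and of the $\eta_g$;
\item show that it is braided, since the braiding on $\cC^G$ is the braiding of $\cC$ on the underlying objects;
\item build a quasi-inverse from $F^{-1}$ with the inverse equivariant structure.
\end{itemize}
Applying this with $\cC=\ol{\cM_V}$, $\cD=\cM_U$ and $F$ the given equivariant equivalence gives
\[
\ol{\cF_{V^G}}\cong(\ol{\cM_V})^G\cong(\cM_U)^G\cong\cF_{U^G}.
\]

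The main obstacle is making sure that the $G$-actions appearing in the hypothesis are exactly the actions used in the equivariantization descriptions. The hypothesis speaks of a $G$-equivariant braided equivalence with respect to the actions induced from $G$ acting on $V$ and on $U$. The identification $\cF_{V^G}\cong(\cM_V)^G$ from \cite[Theorem 4.44]{DGNO} uses the action of $G$ on $(\cF_{V^G})_V\cong\cM_V$ coming from de-equivariantization. So I must argue that these two actions agree up to equivalence of actions, and equivalent actions yield equivalent equivariantizations.

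For this I would use the explicit description of the de-equivariantization action. On a $V$-module $M$, $g$ acts by $M\mapsto M\circ g$, the $g$-conjugate module, because $V\cong\C[G]^\ast$ with $G$ acting by translations. I would then confirm that this matches the action the paper declares as "induced from the $G$-action on $V$". Once this identification is in place, the remaining steps are the formal verifications above.
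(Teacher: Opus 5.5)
Your proposal is correct and follows the paper's intended argument. The paper writes no separate proof; it relies on the preceding paragraph, where $\cF_{V^G}$ and $\ol{\cF_{V^G}}$ are identified via \cite[Theorem 4.44]{DGNO} with the equivariantizations of $\cM_V$ and $\ol{\cM_V}$ (likewise $\cF_{U^G}\cong(\cM_U)^G$), so that the $G$-equivariant braided equivalence $\ol{\cM_V}\simeq\cM_U$ passes to the equivariantizations exactly as you spell out. Your ``main obstacle'' is settled there by definition, since the paper takes the $G$-action on $\cM_V$ to be the one transported from de-equivariantization. Identifying that action with $g$-conjugation of modules (using $g^{-1}$ if you want a left action) is an accurate but optional extra check.
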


The following result has been given in \cite[Theorem 3.2]{Gannonr2025}.
For completeness, we supply a proof of ourselves following the setting of \cite{Dong 6}.
\begin{lem}\label{l3.2} For any $\alpha \in H^3(G, S^1)$, there exists a holomorphic vertex operator algebra $V$ such that $G$ is an automorphism group of $V$ and $\cM_{V^G}$ is braided equivalent to ${\cal Z}(\Vec_G^{\alpha}).$
\end{lem}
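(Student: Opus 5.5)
The plan is to combine three ingredients. The first is the holomorphic orbifold theorem. The second is the group structure on minimal modular extensions of $\Rep(G)$ recalled in Section 2. The third is a supply of holomorphic vertex operator algebras with $G$-actions large enough to reach every class in $H^3(G,S^1)$.

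\textbf{Step 1: every holomorphic $G$-action gives a twisted double.} Let $V$ be a strongly regular holomorphic vertex operator algebra with a faithful $G$-action. By Carnahan--Miyamoto, $V^G$ is again strongly regular. Apply the discussion preceding Lemma \ref{lem:1} with $\cM_V\cong\Vec$: then $\cF_{V^G}$ is the equivariantization $\Vec^G=\Rep(G)$, and $\cM_{V^G}$ is a minimal modular extension of $\Rep(G)$. Its Frobenius--Perron dimension is $o(G)^2$ by \cite[Theorem 6.3]{Dong 6}, and it contains $\Rep(G)$ as a Lagrangian subcategory. By DGNO, any modular category with a Lagrangian $\Rep(G)$ is braided equivalent to $\cZ(\Vec_G^{\beta})$ for some $\beta\in H^3(G,S^1)$. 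Write $\beta=\beta(V)$ for this class.

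\textbf{Step 2: the realizable classes form a subgroup.} Let $V_1,V_2$ be holomorphic with $G$-actions, and let $G$ act diagonally on $V_1\otimes V_2$. Then $V_1^G\otimes V_2^G\subset (V_1\otimes V_2)^G$. Using the identification $\cZ(\Vec_G^{\beta})\cong\operatorname{MME}(\Rep(G))$-torsor elements, together with Theorem \ref{thm 1652} and Huang's local-module theorem, I would show
\[
\cM_{(V_1\otimes V_2)^G}\cong\bigl(\cM_{V_1^G}\bt\cM_{V_2^G}\bigr)^0_{A},
\]
where $A$ is the image of $L_{\Rep(G)}$. The algebra $A$ is the $V_1^G\otimes V_2^G$-submodule $\bigoplus_\lambda (V_1)_\lambda\otimes (V_2)_{\lambda^*}$ of $(V_1\otimes V_2)^G$. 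This is exactly the product $\bt^0_{\Rep(G)}$, and by Lan--Kong--Wen it corresponds to $\beta(V_1)+\beta(V_2)$ under $\operatorname{MME}(\Rep(G))\cong H^3(G,S^1)$. Inverses come for free because $H^3(G,S^1)$ is finite, so each class has finite order. The realizable classes therefore form a subgroup $R\subseteq H^3(G,S^1)$, and it suffices to show that $R$ contains a generating set.

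\textbf{Step 3: surjectivity, the main obstacle.} I would use lattice vertex operator algebras. Embed $G\subset S_n$ by the Cayley embedding, and let $G$ act on $V_{\Lambda}$, where $\Lambda=E_8^{\oplus n}$ or $\Lambda$ is the Leech lattice tensored with the permutation lattice. The action is by lifts $\hat g=\sigma_g\exp(2\pi i h_g(0))$ of lattice automorphisms, twisted by inner Heisenberg elements. The freedom in choosing the lifts and the shifts $h_g$ changes $\beta$. Following the Evans--Gannon analysis of lattice orbifolds, I would compute $\beta$ from the twisted-module data: the conformal weights of $\hat g$-twisted modules and the $S$-matrix determine $\beta$ restricted to cyclic subgroups, and Heisenberg shifts give the "Chern--Simons'' classes pulled back from a torus $T\supset G$. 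I expect this step to be the hardest. The fallback is to show that these classes together with permutation classes generate $H^3(G,S^1)$, using the fact that every class in $H^4(BG,\mathbb Z)$ is pulled back from $H^4(BU(N),\mathbb Z)\oplus H^4(BS_n,\mathbb Z)$-type classes after stabilization (as in Gannon--Evans), and then invoke Step 2. If that fails, I would instead cite \cite[Theorem 3.2]{Gannonr2025} for this single generation statement while keeping Steps 1--2 in the paper's own framework.
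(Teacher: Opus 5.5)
\textbf{There is a genuine gap: Step~3, which is the whole content of the lemma, is not proved.} Your Steps 1 and 2 are sound reductions as far as they go. Step 2 is essentially the formula $\cM_{(V_1\otimes V_2)^G}\cong \cM_{V_1^G}\bt^0_{\Rep(G)}\cM_{V_2^G}$ combined with the Lan--Kong--Wen group law, and a submonoid of a finite group is a subgroup. But together they only show that the realized classes form a subgroup of $H^3(G,S^1)$. The lemma asserts that this subgroup is all of $H^3(G,S^1)$, and Step~3 does not establish that.

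\begin{itemize}
\item The lattice sketch computes nothing.
\item The proposed generation statement is unsupported, and its Chern-class part is false in general. For $G=(\mathbb{Z}/n)^3$, Chern classes of representations are polynomials in first Chern classes of characters. So they lie in the span of cup products of degree-two classes in $H^4(BG,\mathbb{Z})\cong(\mathbb{Z}/n)^7$. That span is only $(\mathbb{Z}/n)^6$ and misses the type~III class $\exp(2\pi i a_1b_2c_3/n)$. You give no argument that permutation classes fill this gap.
\item Citing Gannon's Theorem 3.2 does not rescue the argument. That theorem is precisely this lemma, so citing it makes Steps 1--2 superfluous rather than completing them.
\end{itemize}

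\textbf{There is also a second gap.} Step~1, and Step~2 applied to $(V_1\otimes V_2)^G$, rely on Carnahan--Miyamoto, which requires $G$ solvable. The lemma is stated for arbitrary finite $G$. For nonsolvable $G$ you do not know that $V^G$ is rational and $C_2$-cofinite, so you cannot even assert that $\cM_{V^G}$ is a modular tensor category.

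\textbf{The missing idea is to run the orbifold construction backwards, as the paper does.}
\begin{itemize}
\item Evans--Gannon supply, for each $\alpha$, a rational vertex operator algebra $U$ with $\cM_U\cong\cZ(\Vec_G^\alpha)$.
\item Let $V$ be the image in $\cM_U$ of the regular algebra $\C[G]^*\in\Rep(G)\subset\cZ(\Vec_G^\alpha)$. It is a regular commutative algebra in $\cM_U$, hence a vertex operator algebra extension of $U$.
\item Since $\qdim_U(V)=o(G)$, we get $\FPdim((\cM_U)^0_V)=o(G)^2/o(G)^2=1$, so $V$ is holomorphic.
\item Right translation on $\C[G]^*$ acts by algebra automorphisms. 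Transporting it through the equivalence gives automorphisms $g$ of the algebra $V$, i.e.\ $g\,Y(u,1)v=Y(gu,1)\,gv$.
\item Hence $G$ acts on the vertex operator algebra $V$ with $V^G=U$.
\end{itemize}
This route needs no cocycle computation and no generation result. It also needs no orbifold rationality theorem, because $V^G=U$ is rational by construction.
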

\begin{proof}
By \cite{EG}, there exists a rational vertex operator algebra $U$ such that $\cM_U$ and ${\cal Z}(\Vec_G^{\alpha})$ are braided equivalent. Note that $\mathbb{C}[G]^*$ is a regular commutative algebra in $\Rep(G)$ and ${\cal Z}(\Vec_G^{\alpha})$ is a minimal modular extension of $\Rep(G)$.
Let $V$ be the corresponding regular commutative algebra in $\cM_U$. Then $\qdim_{U} (V)=\FPdim_{{\cal Z}(\Vec_G^{\alpha})}(\C[G]^*)=o(G)$ and $\FPdim((\cM_U)_V^0)=\frac{\FPdim (\cM_U)}{\qdim_U(V)^2}=1$. Thus, $V$ is a holomorphic vertex operator algebra. It remains to show that $G$ is an automorphism group of $V$ and $V^G=U$.

Note that $\C[G]^*$ is a commutative associative algebra over $\C$ with a basis $\{e_a \mid a \in G\}$ of complete orthogonal idempotents given by $e_a(b)=\delta_{a,b}$. It is easy to see that $a \cdot e_b = e_{ab}$ and the product $\mu: \C[G]^*\otimes \C[G]^*\to \C[G]^*$ defined by $e_ae_b=\delta_{a,b}e_a$ is a $G$-module homomorphism. The unit map of this algebra is given by $i_{\C[G]^*}: \underline{1} \to \C[G]^*, 1 \mapsto 1_{\C[G]^*}=\sum_{a \in G} e_a$. In what follows, $g\in G$ denotes the automorphism of $\C[G]^*$ by the right translation $e_b\mapsto e_{bg^{-1}}$. Clearly, $g\circ\mu =\mu\circ (g\otimes g)$ and $g\circ i_{\C[G]^*}=i_{\C[G]^*}$ for any $g\in G$. Thus, the $g$ action on $\C[G]^*$ gives an automorphism of $\C[G]^*$ and $G$ can be regarded as an automorphism group of the regular algebra $\C[G]^*$. In particular, $(\C[G]^*)^G=\C 1_{\C[G]^*}\cong \underline{1}$.
Using the braided equivalence from ${\cal Z}(\Vec_G^\alpha)$ to $\cM_U$, we see that $G$ is an automorphism group of the regular algebra $V$. That is, $g\circ m=m\circ(g\boxtimes g)$ for $g\in G$, where $m: V\boxtimes_U V\to V$ is defined by $m(u\boxtimes v)=Y(u,1)v$ for $u,v\in V$. Then $gY(u,1)v=Y(gu,1)gv$. This implies that $G$ is an automorphism group of vertex operator algebra $V$ such that $V^G=U$.
\end{proof}

\begin{thm}\label{thm:obfd-rev}
Let $V,U$ be strongly regular vertex operator algebras. Assume $G$ is a finite group acting faithfully on $V,U$ as automorphisms such that $V^G,U^G$ are strongly regular, and $\overline{\mathcal{M}_{V}}$ and $\mathcal{M}_U$ are $G$-equivariantly braided equivalent. Then there exists a strongly regular holomorphic vertex operator algebra $H$ such that $\ol{\cM_{V^{G}}} \cong \cM_{(U \ot H)^{G}}$.

\end{thm}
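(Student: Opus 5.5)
We use Lemma \ref{lem:1} together with the torsor structure on minimal modular extensions, and then adjust by a holomorphic orbifold from Lemma \ref{l3.2}.

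\textbf{Step 1: both sides are minimal modular extensions of the same category.} By Lemma \ref{lem:1}, $\ol{\cF_{V^G}}\cong\cF_{U^G}$. By the discussion preceding Lemma \ref{lem:1}, $\cF_{V^G}\subset\cM_{V^G}$ has M\"uger center $\Rep(G)$, and its centralizer in $\cM_{V^G}$ is $\Rep(G)$. The dimension count
\[
\FPdim(\cM_{V^G})=o(G)\,\FPdim(\cF_{V^G})
\]
then shows that $\cM_{V^G}$ is a minimal modular extension of $\cF_{V^G}$. Reversing braidings, $\ol{\cM_{V^G}}$ is a minimal modular extension of $\ol{\cF_{V^G}}\cong\cF_{U^G}$. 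Likewise $\cM_{U^G}$ is a minimal modular extension of $\cF_{U^G}$. So $\ol{\cM_{V^G}}$ and $\cM_{U^G}$ both lie in $\operatorname{MME}(\cF_{U^G})$, which is nonempty. Here one must check that the identification $\ol{\cF_{V^G}}\cong\cF_{U^G}$ is compatible with the embeddings of $\Rep(G)$. Since the equivalence of Lemma \ref{lem:1} comes from equivariantizing a $G$-equivariant equivalence, it preserves the canonical copy of $\Rep(G)$. This is where the $G$-equivariance hypothesis enters.

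\textbf{Step 2: use the torsor structure.} By \cite{LanKongWen}, $\operatorname{MME}(\cF_{U^G})$ is a torsor over $\operatorname{MME}(\Rep(G))$, and the latter is identified with $H^3(G,S^1)$ via $\alpha\mapsto\cZ(\Vec_G^\alpha)$. Hence there is an $\alpha$ with
\[
\ol{\cM_{V^G}}\cong\cM_{U^G}\bt^0_{\Rep(G)}\cZ(\Vec_G^\alpha).
\]
By Lemma \ref{l3.2}, choose a holomorphic $H$ with a $G$-action such that $\cM_{H^G}\cong\cZ(\Vec_G^\alpha)$, compatibly with $\Rep(G)$. Any faithful action works here, since $\Rep(G)\subset\cM_{H^G}$ comes from the orbifold.

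\textbf{Step 3: identify the relative product with an orbifold.} Let $G$ act diagonally on $U\ot H$. We claim
\[
\cM_{(U\ot H)^G}\cong(\cM_{U^G}\bt\cM_{H^G})^0_{A},
\]
where $A=\bigoplus_{X\in\cO(\Rep(G))}X\bt X^*$ is the image of $L_{\Rep(G)}$. The argument has two parts.

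First, as a $U^G\ot H^G$-module,
\[
(U\ot H)^G=\bigoplus_{\lambda\in\mathrm{Irr}(G)} U_\lambda\ot H_{\lambda^*},
\]
by Schur–Weyl type duality (the decomposition $U=\bigoplus_\lambda W_\lambda\ot U_\lambda$ of \cite{Dong 6}). Under the identification of $\Rep(G)$ inside each category, this is exactly $A$.

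Second, by \cite{Huang 1}, $\cM_{(U\ot H)^G}$ is the category of local $A$-modules in $\cM_{U^G\ot H^G}\cong\cM_{U^G}\bt\cM_{H^G}$. The latter equivalence uses Lemma \ref{lem 1928}.

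It follows that $\cM_{(U\ot H)^G}\cong\cM_{U^G}\bt^0_{\Rep(G)}\cM_{H^G}\cong\ol{\cM_{V^G}}$.

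\textbf{Step 4: regularity.} $U\ot H$ is strongly regular by Lemma \ref{lem 1928}. Strong regularity of $(U\ot H)^G$ follows because it contains $U^G\ot H^G$, which is strongly regular, and it is a simple extension of it, so Theorem \ref{thm 1736} and Theorem \ref{thm 1759} apply. The strong regularity of $H^G$ comes from the realization in \cite{EG} used in Lemma \ref{l3.2}.

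\textbf{Main obstacle.} The hardest step is Step 3 together with the compatibility in Step 1. The torsor action must be defined using the specific embeddings of $\Rep(G)$, and the algebra $(U\ot H)^G$ must match $L_{\Rep(G)}$ under exactly those embeddings. I would handle this by working with the explicit $V$-module/$G$-module decompositions of \cite[Theorem 6.4]{Dong 6}, tracking the identification of $\Rep(G)$ with $\cF'$ on each side.
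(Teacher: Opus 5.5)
Your proposal is correct and follows the paper's proof. The steps match: Lemma \ref{lem:1}, then minimality of both $\ol{\cM_{V^G}}$ and $\cM_{U^G}$ over $\cF_{U^G}$, then the torsor result of \cite{LanKongWen} to produce $\alpha\in H^3(G,S^1)$, then Lemma \ref{l3.2} to realize $\cZ(\Vec_G^\alpha)\cong\cM_{H^G}$, and finally $\cM_{U^G}\bt^0_{\Rep(G)}\cM_{H^G}\cong\cM_{(U\ot H)^G}$. The paper simply cites \cite[Theorem 6.3]{Dong 6} for minimality and \cite[Theorem 7.1]{Dong 6} for your Step 3; you sketch those arguments yourself and add the regularity check of Step 4 and the $\Rep(G)$-compatibility remarks, which the paper leaves implicit.
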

\begin{proof}
By Lemma \ref{lem:1}, $\cF := \ol{\cF_{V^{G}}} \cong \cF_{U^{G}}$. Now by \cite[Theorem 6.3]{Dong 6}, both $\cM_{U^G}$ and $\ol{\cM_{V^{G}}}$ are minimal modular extensions of $\cF$, so by \cite[Theorem 5.4]{LanKongWen}, there exists a 3-cocycle $\omega \in H^3(G, S^1)$ such that 
\[\ol{\cM_{V^{G}}} \cong \cM_{U^G} \bt_{\Rep(G)}^{0} \cZ(\Vec_{G}^{\omega})\,,\]
where we recall the definition of $\bt_{\Rep(G)}^{0}$ from Section 2.
By Lemma \ref{l3.2}, there exists a holomorphic vertex operator algebra $H$ with a $G$-action such that $\cZ(\Vec_{G}^{\omega}) \cong \cM_{H^{G}}$. Thus,
\[\ol{\cM_{V^{G}}} \cong \cM_{U^G} \bt_{\Rep(G)}^{0} \cM_{H^{G}} \cong \cM_{(U \ot H)^G}\,,\]
where the last equivalence follows from \cite[Theorem 7.1]{Dong 6}.
\end{proof}


\section{Applications}

In this section, we apply the general results established in the
preceding sections to several important families of rational vertex
operator algebras. We first recall standard examples satisfying the
rationality, $C_2$-cofiniteness, CFT-type, and positivity assumptions
used throughout the paper. These include lattice, Virasoro, affine,
parafermion, principal $W$-, and orbifold vertex operator algebras.

Starting from the known realization of reverse categories for
positive-definite even lattice vertex operator algebras, we combine
commutant constructions, conformal embeddings, level-rank dualities,
tensor products, and extensions to produce further realizations. We
establish the reverse-category conjecture for the unitary Virasoro
minimal models, affine vertex operator algebras of $ABCDE$ types at positive integral levels, and the corresponding
principal $W$-algebras of $ADE$ types. We also show that a realization for
an affine vertex operator algebra yields a corresponding realization
for its parafermion vertex operator algebra.

Now we briefly discuss some familiar examples of strongly regular vertex operator algebras \cite{Dong 5}.

\begin{enumerate}[label=(\arabic*)]
\item Let $L$ be a positive definite even lattice, then the lattice vertex operator algebra $V_L$ is strongly regular \cite{Borcherds 1,Dong 11,Dong 2,Frenkel 3}.

\item Let $m$ be a positive integer, and denote $c_m=1-\frac{6}{(m+2)(m+3)}$, then the Virasoro vertex operator algebra $L(c_m,0)$ is strongly regular \cite{Dong 12,Frenkel 1,Wang 1}.

\item Let $\mathfrak{g}$ be a finite-dimensional simple Lie algebra, $k$ be a positive integer, then the affine vertex operator algebra $L_{\widehat{\mathfrak{g}}}(k,0)$ is strongly regular \cite{Dong 2,Frenkel 1,Haisheng Li 2}.

\item Let $\mathfrak{g}$ be a finite-dimensional simple Lie algebra, $k$ be a positive integer, then the parafermion vertex operator algebra $K(\mathfrak{g},k)$ is strongly regular \cite{Arakawa 5,Arakawa 6,Dong 7}.

\item Let $\mathfrak{g}$ be a finite-dimensional simple Lie algebra of ADE type, $k$ be a positive integer, and define $l_k$ by the formula $l_k=\frac{k+h^{\vee}}{k+h^{\vee}+1}-h^{\vee}$, where $h^{\vee}$ is the dual Coxeter number of $\mathfrak{g}$, then the $W$-algebra $\mathcal{W}_{l_k}(\mathfrak{g})$ is strongly regular \cite{Arakawa 2,Arakawa 3,Arakawa 4,Arakawa 1}.

\item Let $V$ be a rational, $C_2$-cofinite vertex operator algebra of CFT type, $G$ be a finite solvable automorphism group of $V$ such that the conformal weight of any irreducible $g$-twisted $V$-module for $g\in G$ is positive except itself. Then the fixed point vertex operator subalgebra $V^G$ is strongly regular \cite{Carnahan 1,Dong 15,Miyamoto 1}.
\end{enumerate}

The following result obtained in \cite{Dong 6} asserts that the reverse module categories of lattice vertex operator algebras are realizable.

\begin{thm}\label{thm 2007}
Let $L$ be a positive definite even lattice, then $\overline{\mathcal{M}_{V_L}}$ is braided equivalent to $\mathcal{M}_{V_K}$ for some positive definite even lattice $K$. If $L$ is also unimodular, $V_L$ is holomorphic and $\mathcal{M}_{V_L}$ is braided equivalent to $\operatorname{Vec}$ and $\overline{\mathcal{M}_{V_L}}=\mathcal{M}_{V_L}$. If $L$ is not unimodular, $L$ can be embedded in a positive definite even unimodular lattice $E$ and $V_{L\oplus K} = V_L \otimes V_K$ is a conformal subalgebra of $V_E$, where $K = L^{\perp}$ is the orthogonal complement of $L$ in $E$.
\end{thm}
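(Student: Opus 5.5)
The plan is to prove the structural statements first and then obtain the braided equivalence from Theorem \ref{thm 1743}. The unimodular case is immediate. If $L$ is even and unimodular, then $L^\circ/L$ is trivial, so $V_L$ has exactly one irreducible module and is holomorphic. Hence $\mathcal{M}_{V_L}\simeq \Vec$. Since the braiding on $\Vec$ is symmetric, $\overline{\mathcal{M}_{V_L}}=\mathcal{M}_{V_L}$, and we may take $K=L$ (or $K=0$).

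For the general case, I would first embed $L$ primitively into a positive definite even unimodular lattice $E$. To construct $E$, I would use the standard argument. The discriminant form $q_L$ on $L^\circ/L$ has signature $\operatorname{rank}L \bmod 8$. By Nikulin's existence theorem, for large enough rank there is a positive definite even lattice $K$ whose discriminant form is isomorphic to $(L^\circ/L,-q_L)$. Adding copies of $E_8$ gives enough room if needed. The graph of an anti-isometry $\phi\colon L^\circ/L\to K^\circ/K$ is an isotropic subgroup of maximal order. Its preimage $E\subset L^\circ\oplus K^\circ$ is then even unimodular, contains $L\oplus K$, and satisfies $K=L^\perp$ and $L=K^\perp$ in $E$. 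The fact that $L$ is primitive in $E$ comes from the graph construction. Then $V_{L\oplus K}=V_L\otimes V_K$ is a conformal subalgebra of the holomorphic $V_E$, because both have central charge $\operatorname{rank}E$.

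Next I would compute the commutants using the lattice description $V_E=\bigoplus_{\lambda\in E}M(1)\otimes e^\lambda$. The commutant $C_{V_E}(V_L)$ consists of vectors annihilated by $h(n)$ for $h\in\mathfrak h_L$ and $n\geq 0$. Annihilation for $n=0$ forces $\lambda\perp L$, so $\lambda\in K$. Annihilation for $n>0$ kills any dependence on the $\mathfrak h_L$ oscillators. Thus $C_{V_E}(V_L)=V_K$, and symmetrically $C_{V_E}(V_K)=V_L$. Both $V_L$ and $V_K$ are strongly regular, so Theorem \ref{thm 1743} gives $\overline{\mathcal{M}_{V_L}}\simeq\mathcal{M}_{V_K}$.

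As a check, this matches the explicit picture: $V_E=\bigoplus_{\lambda+L}V_{\lambda+L}\otimes V_{\phi(\lambda)+K}$, and the conformal weights satisfy $q_K(\phi(\lambda))=-q_L(\lambda)\bmod 1$, as required for reversed twists. I expect the main obstacle to be the lattice-theoretic step of producing a positive definite $K$ with discriminant form $-q_L$. I would handle it by citing Nikulin's theorem, or more concretely by taking $K$ to be the orthogonal complement of $L$ inside a sufficiently large even unimodular lattice $E_8^{\oplus r}$ that contains $L$. Every even lattice embeds into such a lattice after replacing $L$ by a primitive hull, which requires a little care: the primitive saturation of $L$ inside $E_8^{\oplus r}$ must equal $L$ itself. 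If that fails, I would modify the embedding by first embedding into $L\oplus L'$ for a complementary even lattice $L'$.
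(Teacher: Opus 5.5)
Your proposal is correct and follows the route the statement itself outlines (the paper quotes this result from earlier work without giving a proof). Nikulin's existence theorem gives a positive definite even $K$ with discriminant form $(L^\circ/L,-q_L)$; gluing along the graph of the anti-isometry makes $L$ and $K$ primitive mutual orthogonal complements in an even unimodular $E$; and then $C_{V_E}(V_L)=V_K$ and $C_{V_E}(V_K)=V_L$ feed into Theorem \ref{thm 1743}, with primitivity being exactly what the double-commutant condition needs.

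Drop the fallback through $E_8^{\oplus r}$, though, because not every even lattice embeds there. Norm-$2$ vectors of $E_8^{\oplus r}$ lie in a single summand, so the connected chain of nine simple roots of $A_9$ would have to sit inside one copy of $E_8$, which has rank $8$. The Nikulin argument is the one to keep.
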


\begin{ex}\cite{Dong 6}
$V_{A_1}\otimes V_{E_7}$ is a conformal subalgebra of $V_{E_8}$ and $\overline{\mathcal{M}_{V_{A_1}}}$ is braided equivalent to $\mathcal{M}_{V_{E_7}}$.
\end{ex}
 
Now we are ready to show that the reverse module categories of Virasoro vertex operator algebras $L(c_m,0)$ are realizable. First we recall some results from \cite{Lam 1}.

\begin{thm}
Let $m$ be a positive integer, then the following statements hold:
\begin{enumerate}[label=(\arabic*)]
\item The lattice vertex operator algebra $V_{A_1^{m+1}}$ contains a conformal subalgebra isomorphic to the tensor product of certain Virasoro vertex operator algebras and an affine vertex operator algebra 
$$U_m=L(c_1,0)\otimes L(c_2,0)\otimes \cdots\otimes L(c_m,0)\otimes L_{\widehat{\mathfrak{sl}_2}}(m+1,0).$$

\item As a module of $U_m$,
$$V_{A_1^{m+1}}\cong \bigoplus\limits_{\substack{0\le k_j \le j+1\\k_j\equiv 0\mod 2\\j=0,\cdots ,m}} 
L(c_1,h^1_{k_0+1,k_1+1})\otimes\cdots\otimes L(c_m,h^m_{k_{m-1}+1,k_m+1})\otimes L_{\widehat{\mathfrak{sl}_2}}(m+1,k_m),$$
where $h _ { r , s } ^ { m } = \frac { ( r ( m + 3 ) - s ( m + 2 ) ) ^ { 2 } - 1 } { 4 ( m + 2 ) ( m + 3 ) }$ for $1 \le r \le m + 1$ and $1 \le s \le m + 2$.
\end{enumerate}
\end{thm}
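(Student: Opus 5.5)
The plan is to build the decomposition by induction on $m$, repeatedly applying the Goddard--Kent--Olive (GKO) coset construction for $\widehat{\mathfrak{sl}_2}$. Throughout, write $L(k,j)$ for $L_{\widehat{\mathfrak{sl}_2}}(k,j)$.

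The starting point is the Frenkel--Kac identification $V_{A_1}\cong L(1,0)$ as vertex operator algebras. Taking tensor powers gives
\[
V_{A_1^{m+1}}\cong L(1,0)^{\otimes(m+1)}.
\]
The diagonal embedding $\mathfrak{sl}_2\hookrightarrow\mathfrak{sl}_2^{\oplus(m+1)}$ then produces a subalgebra $L(m+1,0)\subset V_{A_1^{m+1}}$, where the level is $m+1$ because levels add. The Virasoro factors will appear as successive commutants.

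The key input is the GKO branching rule. For $j\ge 1$, the commutant of the diagonal $L(j+1,0)$ inside $L(j,0)\otimes L(1,0)$ contains a Virasoro vector of central charge
\[
1+\frac{3j}{j+2}-\frac{3(j+1)}{j+3}=1-\frac{6}{(j+2)(j+3)}=c_j,
\]
so $L(c_j,0)\otimes L(j+1,0)$ is a conformal subalgebra. Moreover, for $0\le b\le j$ and $a\in\{0,1\}$,
\[
L(j,b)\otimes L(1,a)\cong\bigoplus_{\substack{0\le c\le j+1\\ c\equiv a+b \bmod 2}} L(c_j,h^j_{b+1,c+1})\otimes L(j+1,c),
\]
and each summand occurs with multiplicity one. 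I would quote this from GKO and Kac--Wakimoto.

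The argument for this rule runs as follows. The character identity (the product of two affine characters equals a sum of Virasoro characters times affine characters) fixes the multiplicities. Rationality of $L(c_j,0)$ and of $L(j+1,0)$, together with the unitarity of the minimal series, implies complete reducibility into irreducible tensor products. Finally, the simplicity of the vertex operator algebra generated by the coset Virasoro vector identifies it with $L(c_j,0)$ rather than a Verma quotient, and I would prove this by a unitarity argument. I expect this identification of the coset Virasoro subalgebra as the simple quotient, together with the multiplicity-one statement, to be the main obstacle. The plan is to rely on the known unitary structure rather than reprove the character identities.

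With the branching rule in hand, the induction is straightforward.

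For $m=1$, apply the rule with $j=1$ and $b=a=0$. This gives
\[
V_{A_1^2}=L(1,0)\otimes L(1,0)=\bigoplus_{k_1\in\{0,2\}}L(c_1,h^1_{1,k_1+1})\otimes L(2,k_1).
\]
This matches the statement with $k_0=0$.

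For the inductive step, assume the formula holds for $V_{A_1^{m}}$. Write $V_{A_1^{m+1}}=V_{A_1^m}\otimes V_{A_1}$. Each summand of $V_{A_1^m}$ ends in a factor $L(m,k_{m-1})$ with $k_{m-1}$ even. Tensor this factor with $L(1,0)$ and apply the branching rule with $j=m$, $b=k_{m-1}$ and $a=0$. This produces the summands
\[
L(c_m,h^m_{k_{m-1}+1,k_m+1})\otimes L(m+1,k_m),
\]
where $0\le k_m\le m+1$ and $k_m\equiv k_{m-1}\equiv 0 \pmod 2$.

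Substituting these summands reproduces exactly the claimed sum for $m+1$ tensor factors. Part (1) follows by taking the summand with all $k_j=0$. That summand is $U_m$, since $h^j_{1,1}=0$ for every $j$. It is a conformal subalgebra because the central charges telescope:
\[
\sum_{j=1}^m c_j+\frac{3(m+1)}{m+3}=m+1.
\]
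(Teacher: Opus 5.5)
Your proof is correct. The paper gives no argument of its own here; the statement is simply recalled from \cite{Lam 1}. What you have written is the standard derivation behind that citation: the Frenkel--Kac identification $V_{A_1}\cong L_{\widehat{\mathfrak{sl}_2}}(1,0)$, followed by $m$ successive applications of the GKO branching rule for $L(j,b)\otimes L(1,0)$. The parity condition $k_j\equiv k_{j-1}\equiv 0 \pmod 2$ and the ranges $0\le k_j\le j+1$ come out exactly as stated. The only external inputs are the GKO character identity and the unitarity argument identifying the coset Virasoro algebra with $L(c_j,0)$, and both may legitimately be cited.

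One justification should be tightened. The identity $\sum_{j=1}^m c_j+\frac{3(m+1)}{m+3}=m+1$ is only a consistency check: equal central charge does not by itself show that $U_m$ has the same conformal vector as $V_{A_1^{m+1}}$. The real reason is that the coset vectors telescope. Write $\omega^{[j]}$ for the Sugawara vector of the diagonal $\widehat{\mathfrak{sl}_2}$ on the first $j$ tensor factors, and $\omega_j$ for that of the $j$-th factor, so that $\omega^{[1]}=\omega_1$. Then $\omega^{(j)}=\omega^{[j]}+\omega_{j+1}-\omega^{[j+1]}$, and hence $\sum_{j=1}^m\omega^{(j)}+\omega^{[m+1]}=\sum_{j=1}^{m+1}\omega_j$, which is the conformal vector of $V_{A_1^{m+1}}$.

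Equivalently, include ``$U_{m-1}$ is a conformal subalgebra of $V_{A_1^{m}}$'' in your induction hypothesis. Then use transitivity with the conformal inclusion $L(c_m,0)\otimes L(m+1,0)\subset L(m,0)\otimes L(1,0)$. This is also what makes meaningful the restriction, in your inductive step, from $U_{m-1}\otimes L(1,0)$ to $U_m=L(c_1,0)\otimes\cdots\otimes L(c_{m-1},0)\otimes\bigl(L(c_m,0)\otimes L(m+1,0)\bigr)$.
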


Now we are in a position to prove the following result.

\begin{thm}
Let $m$ be a positive integer, then $\overline{\mathcal{M}_{L(c_m,0)}}$ is realizable.
\end{thm}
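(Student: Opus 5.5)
The plan is to realize $L(c_m,0)$ as a coset inside a lattice vertex operator algebra and then apply Proposition \ref{prop 2043} together with Theorem \ref{thm 2007}.

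By Theorem \ref{thm 2007}, $\overline{\mathcal{M}_{V_{A_1^{m+1}}}}$ is realizable, since $A_1^{m+1}$ is a positive definite even lattice. Set $V=V_{A_1^{m+1}}$, which is strongly regular. Take $V^1=L(c_m,0)$, embedded in $V$ through the decomposition of $U_m$ in the recalled theorem of \cite{Lam 1}. Then $V^1$ is strongly regular by example (2).

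The key computation is identifying the commutant $C_V(V^1)$. Its natural candidate is a strongly regular extension $E_m$ of
$$L(c_1,0)\otimes\cdots\otimes L(c_{m-1},0)\otimes L_{\widehat{\mathfrak{sl}_2}}(m+1,0).$$
From the decomposition of $V$ as a $U_m$-module, $C_V(V^1)$ is the sum of those summands whose $L(c_m,\cdot)$ factor is $L(c_m,0)=L(c_m,h^m_{1,1})$. This forces $k_{m-1}=k_m=0$. So $C_V(V^1)$ is the sum over $k_0,\dots,k_{m-2}$ of the remaining tensor products, with the $\widehat{\mathfrak{sl}_2}$ factor equal to $L_{\widehat{\mathfrak{sl}_2}}(m+1,0)$.

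To establish the properties of $E_m:=C_V(V^1)$:
\begin{enumerate}[label=(\roman*)]
\item $E_m$ is an extension of a tensor product of strongly regular algebras. It is therefore strongly regular by Lemma \ref{lem 1928} and Lemma \ref{lem 1738}, taking $W=V$ and using the subalgebra $L(c_m,0)\otimes(\text{the rest})$ of $U_m$.
\item The double commutant $C_V(E_m)$ consists of the summands in which every factor except the $L(c_m,\cdot)$ factor is the vacuum module. That forces $k_0=\cdots=k_m=0$, so $C_V(E_m)$ is $L(c_m,h^m_{1,1})=L(c_m,0)=V^1$.
\end{enumerate}
Proposition \ref{prop 2043} then gives that $\overline{\mathcal{M}_{L(c_m,0)}}$ is realizable.

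The main obstacle is the commutant bookkeeping in the decomposition. One must check that the summand with a vacuum factor in a given slot is indeed the full commutant, i.e.\ that $C_V(V^1)$ equals the multiplicity space of $L(c_m,0)$ in $V$. This holds because $V^1\otimes(\text{rest})$ is conformal in $V$ and $V$ decomposes into irreducible tensor products. It also relies on $h^j_{r,s}$ vanishing only for $(r,s)=(1,1)$, so that the parity constraints $k_j\equiv 0 \pmod 2$ and the labels $h^j_{k_{j-1}+1,k_j+1}$ pin down the indices as above.

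If the direct $m$-th factor argument is awkward, a fallback is induction on $m$. One uses $L(c_m,0)$ as the commutant of $L_{\widehat{\mathfrak{sl}_2}}(m,0)\otimes L_{\widehat{\mathfrak{sl}_2}}(1,0)\supset L_{\widehat{\mathfrak{sl}_2}}(m+1,0)$ (the GKO coset). Then one applies the corollary to Proposition \ref{prop 2043}, which only needs strong regularity of the commutant.
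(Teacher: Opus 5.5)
Your proposal is correct and follows the paper's route: $V=V_{A_1^{m+1}}$, strong regularity of $C_V(L(c_m,0))$ via Lemma~\ref{lem 1738}, the double commutant read off from the $U_m$-decomposition, then Proposition~\ref{prop 2043} with Theorem~\ref{thm 2007}. Two small precision points: in (ii) you are really computing $C_V(U^1_m)$, where $U^1_m=L(c_1,0)\otimes\cdots\otimes L(c_{m-1},0)\otimes L_{\widehat{\mathfrak{sl}_2}}(m+1,0)$, so the step should read $V^1\subseteq C_V(E_m)\subseteq C_V(U^1_m)=L(c_m,0)$, which is how the paper argues; and in the stated range $h^j_{r,s}$ vanishes at both $(1,1)$ and $(j+1,j+2)$, with the second excluded only by the parity condition $k_j\equiv 0\pmod 2$.
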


\begin{proof}
For any $m\in \mathbb{Z}_{\ge 1}$, define
$$U^1_m=L(c_1,0)\otimes L(c_2,0)\otimes \cdots\otimes L(c_{m-1},0)\otimes L_{\widehat{\mathfrak{sl}_2}}(m+1,0).$$
Since $U_m\cong L(c_m,0)\otimes U^1_m$ is a conformal subalgebra of $V_{A_1^{m+1}}$, $U^1_m$ is a conformal subalgebra of $C_{V_{A_1^{m+1}}}(L(c_m,0))$. Since $C_{V_{A_1^{m+1}}}(C_{V_{A_1^{m+1}}}(L(c_m,0)))\otimes C_{V_{A_1^{m+1}}}(L(c_m,0))$ is a conformal subalgebra of $V_{A_1^{m+1}}$, it follows that $C_{V_{A_1^{m+1}}}(C_{V_{A_1^{m+1}}}(L(c_m,0)))\otimes U^1_m$ is a conformal subalgebra of $V_{A_1^{m+1}}$. Now we consider the decomposition of $V_{A_1^{m+1}}$ as a module of $U_m$. If 
$$U^1_m\subset L(c_1,h^1_{k_0+1,k_1+1})\otimes\cdots\otimes L(c_m,h^m_{k_{m-1}+1,k_m+1})\otimes L_{\widehat{\mathfrak{sl}_2}}(m+1,k_m),$$
then $k_{m-1}=k_m=0$ and $L(c_m,h^m_{k_{m-1}+1,k_m+1})=L(c_m,0)$. Therefore, $$C_{V_{A_1^{m+1}}}(C_{V_{A_1^{m+1}}}(L(c_m,0)))=L(c_m,0).$$
By Lemma \ref{lem 1738}, $C_{V_{A_1^{m+1}}}(L(c_m,0))$ is strongly regular. Thus, by Proposition \ref{prop 2043} and Theorem \ref{thm 2007}, $\overline{\mathcal{M}_{L(c_m,0)}}$ is realizable.
\end{proof} 

\begin{ex}\cite{Kac 2}
$L(\frac{1}{2},0)\otimes L_{E_8}(2,0)$ is a conformal subalgebra of $V_{E_8\oplus E_8}$ and $\overline{\mathcal{M}_{L(\frac{1}{2},0)}}$ is braided equivalent to $\mathcal{M}_{L_{E_8}(2,0)}$.
\end{ex}

Next, we turn our attention to the realization of the reverse module categories of affine vertex operator algebras. Let $\mathfrak{g}$ be a finite-dimensional simple Lie algebra of ADE type, $L$ be the root lattice of $\mathfrak{g}$, then the affine vertex operator algebra $L_{\widehat{\mathfrak{g}}}(1,0)\cong V_L$ \cite{Haisheng Li 1}. We will need the following result from \cite{Arakawa 1} about $W$-algebras.

\begin{thm}\label{thm 2032}
Let $\mathfrak{g}$ be a finite-dimensional simple Lie algebra of ADE type, and let $k$ be an admissible level for $\widehat{\mathfrak{g}}$. Define $l_k$ by the formula $l_k=\frac{k+h^{\vee}}{k+h^{\vee}+1}-h^{\vee}$, which is a nondegenerate admissible level for $\widehat{\mathfrak{g}}$. Then 
$$C_{L_{\widehat{\mathfrak{g}}}(k,0)\otimes L_{\widehat{\mathfrak{g}}}(1,0)}(L_{\widehat{\mathfrak{g}}}(k+1,0))\cong \mathcal{W}_{l_k}(\mathfrak{g})$$
is simple, rational, and $C_2$-cofinite. Moreover, 
$$C_{L_{\widehat{\mathfrak{g}}}(k,0)\otimes L_{\widehat{\mathfrak{g}}}(1,0)}(C_{L_{\widehat{\mathfrak{g}}}(k,0)\otimes L_{\widehat{\mathfrak{g}}}(1,0)}(L_{\widehat{\mathfrak{g}}}(k+1,0)))\cong L_{\widehat{\mathfrak{g}}}(k+1,0).$$
\end{thm}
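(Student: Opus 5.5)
This statement is quoted from \cite{Arakawa 1}, so the plan is to assemble it from the Arakawa--Creutzig--Linshaw coset realization of principal $W$-algebras. I will not reprove it from scratch.

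The argument proceeds in four steps.

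\begin{enumerate}[label=(\arabic*)]
\item First, recall the theorem of Arakawa--Creutzig--Linshaw. For $\mathfrak{g}$ simply laced and $k$ admissible, the coset of $V^{k+1}(\mathfrak{g})$ inside $V^k(\mathfrak{g})\otimes L_{\widehat{\mathfrak{g}}}(1,0)$ is generically the universal principal $W$-algebra $\mathcal{W}^{l}(\mathfrak{g})$. They determine the level $l$ by matching central charges, which gives $l=l_k$ with
\[
\frac{1}{l_k+h^\vee}=\frac{1}{k+h^\vee}\cdot\frac{k+h^\vee}{k+h^\vee+1}\,\text{-type relation}.
\]
Passing to simple quotients, they show that the coset of $L_{\widehat{\mathfrak{g}}}(k+1,0)$ in $L_{\widehat{\mathfrak{g}}}(k,0)\otimes L_{\widehat{\mathfrak{g}}}(1,0)$ is exactly the simple quotient $\mathcal{W}_{l_k}(\mathfrak{g})$.

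\item Second, check that $l_k$ is nondegenerate admissible, so that Arakawa's rationality and $C_2$-cofiniteness theorems \cite{Arakawa 2,Arakawa 3} apply to $\mathcal{W}_{l_k}(\mathfrak{g})$.

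\item Third, establish the branching rule
\[
L_{\widehat{\mathfrak{g}}}(k,0)\otimes L_{\widehat{\mathfrak{g}}}(1,0)\cong\bigoplus_{\lambda} L_{\widehat{\mathfrak{g}}}(k+1,\lambda)\otimes \mathbf{L}_{l_k}(\lambda),
\]
where the $\mathbf{L}_{l_k}(\lambda)$ are simple $\mathcal{W}_{l_k}$-modules, pairwise nonisomorphic for distinct $\lambda$, and the one paired with $\lambda=0$ is $\mathcal{W}_{l_k}(\mathfrak{g})$ itself.

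\item Finally, deduce the double commutant statement from the branching rule. The commutant of $\mathcal{W}_{l_k}(\mathfrak{g})$ consists of the summands whose $\mathcal{W}$-factor is the vacuum module. Since the modules $\mathbf{L}_{l_k}(\lambda)$ are distinct, only $\lambda=0$ contributes, so the double commutant is $L_{\widehat{\mathfrak{g}}}(k+1,0)$.
\end{enumerate}

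The main obstacle is the decomposition step, specifically proving that the multiplicity spaces are simple and pairwise inequivalent for distinct $\lambda$. I would follow \cite{Arakawa 1}: use the Kazama--Suzuki or GKO-type character identities together with Arakawa's classification of simple $\mathcal{W}_{l_k}$-modules. The distinctness is then read off by comparing conformal weights and characters.
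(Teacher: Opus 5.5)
Your plan follows the paper: the paper gives no argument of its own and simply imports the statement from \cite{Arakawa 1}, and your four steps (coset realization, Arakawa's rationality and $C_2$-cofiniteness at a nondegenerate admissible level, branching rule, double commutant from the branching rule) are a faithful outline of that source. Two small fixes: the displayed relation should read $l_k+h^\vee=\frac{k+h^\vee}{k+h^\vee+1}$, i.e.\ $\frac{1}{l_k+h^\vee}=1+\frac{1}{k+h^\vee}$, since what you wrote simplifies to $l_k=k+1$; and the non-isomorphism of the multiplicity modules should be read off from Arakawa's parametrization of simple $\mathcal{W}_{l_k}(\mathfrak{g})$-modules rather than from characters, which cannot separate a module from its contragredient---though for the double commutant you only need $\mathbf{L}_{l_k}(\lambda)\not\cong\mathcal{W}_{l_k}(\mathfrak{g})$ for $\lambda\neq 0$.
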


The following result asserts that the reverse module categories of affine vertex operator algebras of $ADE$ types and the corresponding $W$-algebras are realizable.

\begin{thm}\label{thm 2117}
Let $\mathfrak{g}$ be a finite-dimensional simple Lie algebra of ADE type, $k$ be a positive integer, then both $\overline{\mathcal{M}_{L_{\widehat{\mathfrak{g}}}(k,0)}}$ and  $\overline{\mathcal{M}_{\mathcal{W}_{l_k}(\mathfrak{g})}}$ are realizable.
\end{thm}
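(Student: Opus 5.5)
We argue by induction on $k$, using Proposition \ref{prop 2043} together with the coset description in Theorem \ref{thm 2032}.

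\emph{Base case $k=1$.} Here $L_{\widehat{\mathfrak{g}}}(1,0)\cong V_L$, where $L$ is the root lattice of $\mathfrak{g}$. Hence $\overline{\mathcal{M}_{L_{\widehat{\mathfrak{g}}}(1,0)}}$ is realizable by Theorem \ref{thm 2007}.

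\emph{Inductive step.} Assume $\overline{\mathcal{M}_{L_{\widehat{\mathfrak{g}}}(k,0)}}$ is realizable. Set
\[
V=L_{\widehat{\mathfrak{g}}}(k,0)\otimes L_{\widehat{\mathfrak{g}}}(1,0),
\qquad
V^1=L_{\widehat{\mathfrak{g}}}(k+1,0),
\]
where $V^1$ sits in $V$ via the diagonal embedding.

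1. $V$ is strongly regular by Lemma \ref{lem 1928}.

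2. $\overline{\mathcal{M}_V}$ is realizable. Apply Lemma \ref{lem 2037} to the inductive hypothesis and the base case, with $U^1,U^2$ the strongly regular realizing algebras; this gives that $\overline{\mathcal{M}_V}$ is braided equivalent to $\mathcal{M}_{U^1\otimes U^2}$. The algebra $U^1\otimes U^2$ is strongly regular by Lemma \ref{lem 1928}.

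3. $V^1$ is strongly regular, being a positive-level affine vertex operator algebra.

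4. By Theorem \ref{thm 2032}, the coset $C_V(V^1)\cong\mathcal{W}_{l_k}(\mathfrak{g})$. This algebra is strongly regular by the list of examples, item (5): it is simple, rational and $C_2$-cofinite, of CFT type, with positive conformal weights.

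5. Again by Theorem \ref{thm 2032}, the double commutant condition $C_V(C_V(V^1))=V^1$ holds.

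With all hypotheses of Proposition \ref{prop 2043} verified, it yields that both $\overline{\mathcal{M}_{L_{\widehat{\mathfrak{g}}}(k+1,0)}}$ and $\overline{\mathcal{M}_{\mathcal{W}_{l_k}(\mathfrak{g})}}$ are realizable. This completes the induction for the affine algebras. It also gives the $W$-algebra statement for every $k\ge 1$: for each $k$, the inductive hypothesis at level $k$ is exactly what is needed to run the step above.

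\emph{Main obstacle.} The delicate point is checking that Theorem \ref{thm 2032} applies and that its conclusions cover strong regularity. First, positive integers $k$ must be admissible levels; this holds. Second, the ambient $\mathfrak{g}$-structure must be the diagonal embedding, which is what makes $L_{\widehat{\mathfrak{g}}}(k+1,0)$ a subalgebra of $V$ in the first place. Third, the Virasoro vector of $V^1$ must satisfy $L(1)\omega^1=0$, as the commutant construction requires; this holds because $V^1$ is a diagonal affine subalgebra whose Sugawara vector is quasi-primary in $V$. Finally, the positivity of conformal weights for $\mathcal{W}_{l_k}(\mathfrak{g})$ does not follow from Theorem \ref{thm 2032} directly. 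To get it, I would either cite \cite{Arakawa 1}, or note that $\mathcal{W}_{l_k}(\mathfrak{g})$ is a simple CFT-type coset in a unitary setting and use Lemma \ref{lem 1738}-type arguments to deduce strong regularity from that of $V$ and $V^1$.
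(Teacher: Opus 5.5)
Your proposal is correct and follows the paper's proof. You induct on $k$ from the lattice case $L_{\widehat{\mathfrak{g}}}(1,0)\cong V_L$ (Theorem \ref{thm 2007}), pass to $L_{\widehat{\mathfrak{g}}}(k,0)\otimes L_{\widehat{\mathfrak{g}}}(1,0)$ via Lemma \ref{lem 2037}, and then use the coset and double-commutant statements of Theorem \ref{thm 2032} to obtain $L_{\widehat{\mathfrak{g}}}(k+1,0)$ and $\mathcal{W}_{l_k}(\mathfrak{g})$; the paper does the same but leaves implicit the appeal to Proposition \ref{prop 2043} and the hypothesis checks that you carry out.
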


\begin{proof}
By Theorem \ref{thm 2007}, $\overline{\mathcal{M}_{L_{\widehat{\mathfrak{g}}}(1,0)}}\cong \overline{\mathcal{M}_{V_L}}$ is realizable, where $L$ is the root lattice of $\mathfrak{g}$. Assume that $\overline{\mathcal{M}_{L_{\widehat{\mathfrak{g}}}(m,0)}}$ is realizable, where $m$ is a positive integer. Lemma \ref{lem 2037} asserts that $\overline{\mathcal{M}_{L_{\widehat{\mathfrak{g}}}(m,0)\otimes L_{\widehat{\mathfrak{g}}}(1,0)}}$ is realizable. Therefore, it follows from Theorem \ref{thm 2032} that $\overline{\mathcal{M}_{L_{\widehat{\mathfrak{g}}}(m+1,0)}}$ and $\overline{\mathcal{M}_{\mathcal{W}_{l_m}(\mathfrak{g})}}$ are realizable. By induction, $\overline{\mathcal{M}_{L_{\widehat{\mathfrak{g}}}(k,0)}}$ and $\overline{\mathcal{M}_{\mathcal{W}_{l_k}(\mathfrak{g})}}$ are realizable for any positive integer $k$.
\end{proof} 

Let $V$ be a strongly regular vertex operator algebra, then there is a tensor product $\boxtimes$ of two $V$-modules \cite{Huang 3,Huang 4,Huang 5}. The irreducible $V$-module $M$ is called a simple current if $M\boxtimes N$ is irreducible again for any irreducible module $N$. An extension vertex operator algebra $U$ of $V$ is called a simple current extension if $U$ is isomorphic to $V\oplus M^1\oplus\cdots \oplus M^k$ as a $V$-module and $M^i$, $1 \le i \le k$, are simple current $V$-modules.

Now we are ready to show that the reverse module categories of affine vertex operator algebras of type $B$ are realizable.

\begin{thm}
For $m\in \mathbb{Z}_{\ge 2}$, $n\in \mathbb{Z}_{\ge 1}$, $\overline{\mathcal{M}_{L_{\widehat{\mathfrak{so}}_{2m+1}}(n,0)}}$ is realizable.
\end{thm}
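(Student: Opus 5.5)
We realize $L_{\widehat{\mathfrak{so}}_{2m+1}}(n,0)$ as a double commutant inside a vertex operator algebra whose reverse module category is already known to be realizable, and then apply Proposition \ref{prop 2043}. The natural choice is the conformal embedding coming from level-rank duality for orthogonal groups. Consider the lattice (or fermionic) realization of $\mathfrak{so}_{(2m+1)n}$ at level one. The standard dual pair gives
\[
L_{\widehat{\mathfrak{so}}_{2m+1}}(n,0)\otimes L_{\widehat{\mathfrak{so}}_{n}}(2m+1,0)\subset L_{\widehat{\mathfrak{so}}_{(2m+1)n}}(1,0).
\]
The ambient algebra $L_{\widehat{\mathfrak{so}}_{N}}(1,0)$ is not always of ADE type; when $N$ is odd it is of type $B$. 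To stay inside the framework already established, I would instead use the even-rank ambient algebra
\[
L_{\widehat{\mathfrak{so}}_{2m+1}}(n,0)\otimes L_{\widehat{\mathfrak{so}}_{2m+1}}(n',0)\otimes\cdots\subset L_{\widehat{\mathfrak{so}}_{2M}}(1,0)=V_{D_M},
\]
or, more simply, the chain $\mathfrak{so}_{2m+1}\oplus \mathfrak{so}_{2m'+1}\subset \mathfrak{so}_{2(m+m')+2}$. Concretely, I take the diagonal embedding
\[
L_{\widehat{\mathfrak{so}}_{2m+1}}(n,0)\otimes L_{\widehat{\mathfrak{so}}_{n}}(2m+1,0)\otimes L_{\widehat{\mathfrak{so}}_{n}}(1,0)\subset L_{\widehat{\mathfrak{so}}_{(2m+2)n}}(1,0),
\]
coming from $\mathbb{C}^{(2m+2)n}=\mathbb{C}^{2m+1}\otimes\mathbb{C}^n\oplus\mathbb{C}^n$. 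Since $(2m+2)n$ is even, the ambient algebra is $L_{\widehat{\mathfrak{so}}_{2(m+1)n}}(1,0)$. This is a type $D$ affine algebra at level one, hence isomorphic to $V_{D_{(m+1)n}}$, so its reverse is realizable by Theorem \ref{thm 2007} (and also by Theorem \ref{thm 2117}).

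The steps are as follows.
\begin{enumerate}
\item Set $V=L_{\widehat{\mathfrak{so}}_{2(m+1)n}}(1,0)$ and $V^1=L_{\widehat{\mathfrak{so}}_{2m+1}}(n,0)$. The algebra $V^1$ is strongly regular by item (3) of the list of examples.
\item Identify $C_V(V^1)$. It contains $L_{\widehat{\mathfrak{so}}_{n}}(2m+1,0)\otimes L_{\widehat{\mathfrak{so}}_{n}}(1,0)$, and in fact equals $C_{L_{\widehat{\mathfrak{so}}_{(2m+1)n}}(1,0)}(V^1)\otimes L_{\widehat{\mathfrak{so}}_{n}}(1,0)$. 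Here $C_{L_{\widehat{\mathfrak{so}}_{(2m+1)n}}(1,0)}(V^1)$ is a finite extension of $L_{\widehat{\mathfrak{so}}_{n}}(2m+1,0)$. The fermionic level-rank duality shows that the commutant of $\mathfrak{so}_{2m+1}$ at level $n$ is the even part of the $\mathfrak{so}_n$ part, possibly extended by a $\mathbb{Z}_2$ simple current.
\item Conclude that $C_V(V^1)$ is strongly regular. Since $C_V(V^1)$ is an extension of a tensor product of strongly regular algebras, this follows from Lemma \ref{lem 1738} and Lemma \ref{lem 1928}.
\item Verify the double commutant condition $C_V(C_V(V^1))=V^1$. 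Start from the branching rule of $V$ restricted to $V^1\otimes C_V(V^1)$, which is obtained from the level-rank decomposition of the free fermion algebra. Then show that the only summand whose $C_V(V^1)$-component contains the vacuum is $V^1\otimes C_V(V^1)$ itself. This mirrors the argument used for $L(c_m,0)$.
\item Apply Proposition \ref{prop 2043} to obtain that $\overline{\mathcal M_{V^1}}$ is realizable.
\end{enumerate}

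The main obstacle is step 4, together with the exact form of the commutant in step 2. The level-rank decomposition for $\mathfrak{so}_{2m+1}\times\mathfrak{so}_n$ is naturally stated for the full free fermion super VOA, not for its even part $L_{\widehat{\mathfrak{so}}}(1,0)$. A sign or $\mathbb{Z}_2$-orbifold issue could therefore make the naive commutant larger than $L_{\widehat{\mathfrak{so}}_{n}}(2m+1,0)$, or make the double commutant a simple current extension of $V^1$.

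To handle this, I would first use the known branching: each $V^1$-module $L(n,\lambda)$ appears paired with a distinct dual weight $\lambda^t$ (up to the $\mathbb{Z}_2$ diagram automorphism). Since $n\ge1$ and $m\ge2$, the only integrable $\mathfrak{so}_{2m+1}$-weight paired with the vacuum of the commutant is $0$; a level-$n$ simple current of weight $n\omega_1$ has nonzero conformal weight. If the double commutant nevertheless turns out to be the simple current extension $V^1\oplus L(n,n\omega_1)$, the fallback is different. I would realize $\overline{\mathcal M}$ of that extension by Proposition \ref{prop 2043}, and then recover $V^1$ as a $\mathbb{Z}_2$-fixed point via Theorem \ref{thm:obfd-rev}, using the $\mathbb{Z}_2$-symmetry of the level-rank pair to supply the required $G$-equivariant equivalence.
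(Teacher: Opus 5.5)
\textbf{There is a genuine gap: Step 4 fails for every even $n$.}

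Your justification, that the simple current $L_{\widehat{\mathfrak{so}}_{2m+1}}(n,n\omega_1)$ has nonzero conformal weight, does not decide whether it lies in $C_V(C_V(V^1))$. What matters is whether it commutes with $C_V(V^1)$, and for even $n$ it does.

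Write $\psi^{i,a}$ ($1\le i\le 2m+1$, $1\le a\le n$) for the fermions on $\mathbb{C}^{2m+1}\otimes\mathbb{C}^n$, and set $\psi^{+,a}=\psi^{1,a}+i\psi^{2,a}$. The vector $\psi^{+,1}_{-1/2}\cdots\psi^{+,n}_{-1/2}\mathbf{1}$ is a highest weight vector of $L_{\widehat{\mathfrak{so}}_{2m+1}}(n,n\omega_1)$ of weight $n/2$. It is the $O(n)$-determinant, so it is $SO(n)$-invariant. It is killed by the nonnegative modes of $L_{\widehat{\mathfrak{so}}_n}(2m+1,0)$, because contractions produce $\langle e_1+ie_2,e_1+ie_2\rangle=0$. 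For $n$ even it is an even vector of $V$, and it commutes with the extra $n$ fermions.

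It also commutes with the remaining summand of $C_V(V^1)$. That summand is $L_{\widehat{\mathfrak{so}}_n}(2m+1,(2m+1)\omega_1)$ tensored with the odd part of the extra fermions. Your Step 2 formula omits it: $C_V(V^1)$ is not $C_{L_{\widehat{\mathfrak{so}}_{(2m+1)n}}(1,0)}(V^1)\otimes L_{\widehat{\mathfrak{so}}_n}(1,0)$. Commutation holds because all nonnegative products land below the lowest weight $n/2+(2m+1)/2$ of the $(n\omega_1,(2m+1)\omega_1)$ sector. Hence $C_V(C_V(V^1))\supseteq V^1\oplus L_{\widehat{\mathfrak{so}}_{2m+1}}(n,n\omega_1)$, and Proposition \ref{prop 2043} does not apply to $V^1$.

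For $n=2$ you can check this by hand.
\begin{itemize}
\item Bosonizing gives $V\cong V_{D_{2m+2}}$.
\item $V^1$ sits conformally (both $c=2m$) in $V_{A_{2m}}\cong L_{\widehat{\mathfrak{sl}}_{2m+1}}(1,0)$, with $A_{2m}=\{x\in\mathbb{Z}^{2m+2}: x_{2m+2}=0,\ \sum x_i=0\}$.
\item One finds $C_V(V^1)=V_{L_0}$, where $\gamma=\epsilon_1+\cdots+\epsilon_{2m+1}$, $\delta=\epsilon_{2m+2}$ and $L_0=\{a\gamma+b\delta : a\equiv b \bmod 2\}$.
\item Then $C_V(V_{L_0})=V_{A_{2m}}$.
\end{itemize}
So the double commutant is $L_{\widehat{\mathfrak{sl}}_{2m+1}}(1,0)=V^1\oplus L_{\widehat{\mathfrak{so}}_{2m+1}}(2,2\omega_1)$, strictly larger than $V^1$.

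For odd $n$ the determinant vector is odd, and your route does work, granted the fermionic level-rank input. That gives a more uniform argument than the paper's, which bootstraps through the level-one case.

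\textbf{The fallback is not supported.} Theorem \ref{thm:obfd-rev} needs three things for $E=V^1\oplus L_{\widehat{\mathfrak{so}}_{2m+1}}(n,n\omega_1)$:
\begin{itemize}
\item an algebra $U$ realizing $\overline{\mathcal M_E}$ with a faithful $\mathbb{Z}_2$-action;
\item $U^{\mathbb{Z}_2}$ strongly regular;
\item a $\mathbb{Z}_2$-\emph{equivariant} braided equivalence.
\end{itemize}
Proposition \ref{prop 2043} only produces $U=C_W(E)$ inside some holomorphic $W\supset V\otimes V_K$, with a plain braided equivalence. You would still have to do the following, and the appeal to the symmetry of the level-rank pair does none of it:
\begin{itemize}
\item extend the involution (an element of $O(n)\setminus SO(n)$) from $V$ to $W$;
\item show it preserves $C_W(E)$ and acts faithfully on it;
\item prove the commutant-pair equivalence is equivariant;
\item show that $E$ is exactly the double commutant.
\end{itemize}

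The paper avoids the problem by choice of ambient algebra. It uses $L_{\widehat{\mathfrak{so}}_{2m+1}}(1,0)^{\otimes n}$ for $n=2$ and $n\ge 4$, after settling $n=1$, and $L_{\widehat{\mathfrak{so}}_{3(2m+1)}}(1,0)$ for $n=3$. In the tensor-power setting the determinant vector has one fermion in each factor, so it never occurs. The double commutant is then $V^1$ by the cited Adamovi\'c and Jiang--Lam results.
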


\begin{proof} We divide the proof into several cases.

(1) The case $n=1$. For $m\in \mathbb{Z}_{\ge 2}$, by \cite[Corollary 7.5, Table 3]{Kac 3} and \cite[Table 2]{Arcuri 1}, there are at most two simple intermediate vertex algebras 
$$L_{\widehat{\mathfrak{so}}_{2m+1}}(1,0)\otimes L_{\widehat{\mathfrak{so}}_{2m+1}}(1,0)\subset W\subsetneq L_{\widehat{\mathfrak{so}}_{4m+2}}(1,0).$$
We denote the left one by $L_{\widehat{\mathfrak{so}}_{2m+1}}(1,0)^1$ and the right one by $L_{\widehat{\mathfrak{so}}_{2m+1}}(1,0)^2$. Then either 
$$C_{L_{\widehat{\mathfrak{so}}_{4m+2}}(1,0)}(L_{\widehat{\mathfrak{so}}_{2m+1}}(1,0)^1)\cong L_{\widehat{\mathfrak{so}}_{2m+1}}(1,0)^2$$
or
$$C_{L_{\widehat{\mathfrak{so}}_{4m+2}}(1,0)}(L_{\widehat{\mathfrak{so}}_{2m+1}}(1,0)^2)\cong L_{\widehat{\mathfrak{so}}_{2m+1}}(1,0)^1.$$
If not, then there are at least three simple intermediate vertex algebras 
$$L_{\widehat{\mathfrak{so}}_{2m+1}}(1,0)\otimes L_{\widehat{\mathfrak{so}}_{2m+1}}(1,0)\subset W\subsetneq L_{\widehat{\mathfrak{so}}_{4m+2}}(1,0).$$
This is a contradiction. Thus, without loss of generality, we can assume that 
$$C_{L_{\widehat{\mathfrak{so}}_{4m+2}}(1,0)}(L_{\widehat{\mathfrak{so}}_{2m+1}}(1,0)^2)\cong L_{\widehat{\mathfrak{so}}_{2m+1}}(1,0)^1.$$
Then 
$$C_{L_{\widehat{\mathfrak{so}}_{4m+2}}(1,0)}(C_{L_{\widehat{\mathfrak{so}}_{4m+2}}(1,0)}(L_{\widehat{\mathfrak{so}}_{2m+1}}(1,0)^1))\cong L_{\widehat{\mathfrak{so}}_{2m+1}}(1,0)^1.$$
By Lemma \ref{lem 1738}, 
$C_{L_{\widehat{\mathfrak{so}}_{4m+2}}(1,0)}(L_{\widehat{\mathfrak{so}}_{2m+1}}(1,0)^1)$ is strongly regular. By Proposition \ref{prop 2043} and Theorem \ref{thm 2117}, for $m\in \mathbb{Z}_{\ge 2}$, $\overline{\mathcal{M}_{L_{\widehat{\mathfrak{so}}_{2m+1}}(1,0)}}$ is realizable.

(2) The case $n=2$. For $m\in \mathbb{Z}_{\ge 2}$, let $\bigoplus_{i=1}^{2m+1}\mathbb{Z}\epsilon^i$ be a lattice such that $(\epsilon^i,\epsilon^j)=\delta_{ij}$. Set $\gamma=\sum_{i=1}^{2m+1}\epsilon^i$. Let $V_{\mathbb{Z}2\gamma}$ be the lattice vertex operator algebra associated with the lattice $\mathbb{Z}2\gamma$. Let $\sigma$ be the automorphism of $V_{\mathbb{Z}2\gamma}$ defined by 
$$\sigma(\gamma(-n_1)\gamma(-n_2)\cdots\gamma(-n_k)e^{\alpha})=(-1)^k\gamma(-n_1)\gamma(-n_2)\cdots\gamma(-n_k)e^{-\alpha}$$
for $n_i\ge 1$, $1\le i\le k$ and $\alpha\in \mathbb{Z}2\gamma$. It follows from \cite{Adamovic 1} that for $m\in \mathbb{Z}_{\ge 2}$, 
$$C_{L_{\widehat{\mathfrak{so}}_{2m+1}}(1,0)^{\otimes 2}}(L_{\widehat{\mathfrak{so}}_{2m+1}}(2,0))\cong V_{\mathbb{Z}2\gamma}^{\sigma}.$$ Moreover, 
$$C_{L_{\widehat{\mathfrak{so}}_{2m+1}}(1,0)^{\otimes 2}}(C_{L_{\widehat{\mathfrak{so}}_{2m+1}}(1,0)^{\otimes 2}}(L_{\widehat{\mathfrak{so}}_{2m+1}}(2,0)))\cong L_{\widehat{\mathfrak{so}}_{2m+1}}(2,0).$$
By Proposition \ref{prop 2043}, for $m\in \mathbb{Z}_{\ge 2}$,  $\overline{\mathcal{M}_{L_{\widehat{\mathfrak{so}}_{2m+1}}(2,0)}}$ is realizable.

(3) The case $n=3$. For $m\in \mathbb{Z}_{\ge 2}$, by \cite[Table 3]{Kac 3} and \cite[Table 2]{Arcuri 1}, 
$$L_{\widehat{\mathfrak{so}}_{2m+1}}(3,0)\otimes L_{\widehat{\mathfrak{sl}}_{2}}(4m+2,0)$$
is a conformal subalgebra of $L_{\widehat{\mathfrak{so}}_{3(2m+1)}}(1,0)$. It follows from \cite{Jiang 2} that $$C_{L_{\widehat{\mathfrak{so}}_{3(2m+1)}}(1,0)}(L_{\widehat{\mathfrak{so}}_{2m+1}}(3,0))\cong L_{\widehat{\mathfrak{sl}}_{2}}(4m+2,0).$$
Assume that $L_{\widehat{\mathfrak{so}}_{2m+1}}(3,0)\subsetneq C_{L_{\widehat{\mathfrak{so}}_{3(2m+1)}}(1,0)}(L_{\widehat{\mathfrak{sl}}_{2}}(4m+2,0))$. By \cite[Theorem 5.10, Example 5.12]{Dong 19}, there exists no nontrivial simple current extension of $L_{\widehat{\mathfrak{so}}_{2m+1}}(3,0)$. Then $C_{L_{\widehat{\mathfrak{so}}_{3(2m+1)}}(1,0)}(L_{\widehat{\mathfrak{sl}}_{2}}(4m+2,0))\otimes L_{\widehat{\mathfrak{sl}}_{2}}(4m+2,0)$ is not a simple current extension of $L_{\widehat{\mathfrak{so}}_{2m+1}}(3,0)\otimes L_{\widehat{\mathfrak{sl}}_{2}}(4m+2,0)$. By \cite[Theorem 7.4]{Kac 3}, 
$$L_{\widehat{\mathfrak{so}}_{3(2m+1)}}(1,0)=C_{L_{\widehat{\mathfrak{so}}_{3(2m+1)}}(1,0)}(L_{\widehat{\mathfrak{sl}}_{2}}(4m+2,0))\otimes L_{\widehat{\mathfrak{sl}}_{2}}(4m+2,0).$$ 
By \cite[Corollary 4.12, Corollary 7.5, Table 3]{Kac 3}, there are two simple current extensions 
$$L_{\widehat{\mathfrak{so}}_{2m+1}}(3,0)\otimes L_{\widehat{\mathfrak{sl}}_{2}}(4m+2,0)\subset W\subset C_{L_{\widehat{\mathfrak{so}}_{3(2m+1)}}(1,0)}(L_{\widehat{\mathfrak{sl}}_{2}}(4m+2,0))\otimes L_{\widehat{\mathfrak{sl}}_{2}}(4m+2,0).$$ 
Thus, $L_{\widehat{\mathfrak{so}}_{2m+1}}(3,0)$ has at least one nontrivial simple current extension. This is a contradiction. Therefore, $$C_{L_{\widehat{\mathfrak{so}}_{3(2m+1)}}(1,0)}(L_{\widehat{\mathfrak{sl}}_{2}}(4m+2,0))\cong L_{\widehat{\mathfrak{so}}_{2m+1}}(3,0).$$ 
By Proposition \ref{prop 2043}, for $m\in \mathbb{Z}_{\ge 2}$, $\overline{\mathcal{M}_{L_{\widehat{\mathfrak{so}}_{2m+1}}(3,0)}}$ is realizable.


(4) The case $n\in \mathbb{Z}_{\ge 4}$. Recall from \cite[Theorem 4.17]{Jiang 2} that for $m\in \mathbb{Z}_{\ge 2}$, $n\in \mathbb{Z}_{\ge 4}$, $$C_{L_{\widehat{\mathfrak{so}}_{2m+1}}(1,0)^{\otimes n}}(L_{\widehat{\mathfrak{so}}_{2m+1}}(n,0))\cong L_{\widehat{\mathfrak{so}}_{n}}(2m+1,0)^G,$$
where $G$ is a finite abelian automorphism group of $L_{\widehat{\mathfrak{so}}_{n}}(2m+1,0)$. Since \cite[Lemma 3.5]{Jiang 1} applies to affine vertex operator algebras of type $B$ as well, we have 
$$C_{L_{\widehat{\mathfrak{so}}_{2m+1}}(1,0)^{\otimes n}}(C_{L_{\widehat{\mathfrak{so}}_{2m+1}}(1,0)^{\otimes n}}(L_{\widehat{\mathfrak{so}}_{2m+1}}(n,0)))\cong L_{\widehat{\mathfrak{so}}_{2m+1}}(n,0).$$
By Proposition \ref{prop 2043} and Case (1), for $m\in \mathbb{Z}_{\ge 2}$, $n\in \mathbb{Z}_{\ge 4}$, $\overline{\mathcal{M}_{L_{\widehat{\mathfrak{so}}_{2m+1}}(n,0)}}$ is realizable.
\end{proof} 

Next we have the following result about affine vertex operator algebras of type $C$.

\begin{thm}
For $m\in \mathbb{Z}_{\ge 3}$, $n\in \mathbb{Z}_{\ge 1}$, $\overline{\mathcal{M}_{L_{\widehat{\mathfrak{sp}}_{2m}}(n,0)}}$ is realizable.
\end{thm}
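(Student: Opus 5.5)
The plan is to realize $L_{\widehat{\mathfrak{sp}}_{2m}}(n,0)$ as a double commutant inside a vertex operator algebra whose reverse module category is already known to be realizable. Proposition \ref{prop 2043} then transfers realizability to it. The natural ambient algebra comes from symplectic--orthogonal level-rank duality. The conformal embedding
\[
L_{\widehat{\mathfrak{sp}}_{2m}}(n,0)\otimes L_{\widehat{\mathfrak{so}}_{n}}(2m,0)\subset L_{\widehat{\mathfrak{so}}_{4mn}}(1,0)^{\mathrm{even}}
\]
is realized inside $4mn$ free fermions, or, more usefully for us, inside the type $D$ lattice-type algebra $L_{\widehat{\mathfrak{so}}_{4mn}}(1,0)$ (\cite[Table 3]{Kac 3}, \cite[Table 2]{Arcuri 1}). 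This ambient algebra is of type $D$, so $\overline{\mathcal M}$ of it is realizable by Theorem \ref{thm 2117} (equivalently Theorem \ref{thm 2007}, since it is the lattice VOA $V_{D_{2mn}}$).

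First, I will identify the commutant $C_{L_{\widehat{\mathfrak{so}}_{4mn}}(1,0)}(L_{\widehat{\mathfrak{sp}}_{2m}}(n,0))$. I expect it to be $L_{\widehat{\mathfrak{so}}_{n}}(2m,0)$ or a finite (simple current or orbifold-type) extension of it. Lemma \ref{lem 1738} shows that this commutant is strongly regular in either case, so its precise form matters only for the double commutant step.

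Second, and most importantly, I will prove the double commutant property
\[
C_{L_{\widehat{\mathfrak{so}}_{4mn}}(1,0)}\bigl(C_{L_{\widehat{\mathfrak{so}}_{4mn}}(1,0)}(L_{\widehat{\mathfrak{sp}}_{2m}}(n,0))\bigr)=L_{\widehat{\mathfrak{sp}}_{2m}}(n,0).
\]
The double commutant is a simple extension of $L_{\widehat{\mathfrak{sp}}_{2m}}(n,0)$ with the same conformal vector. So it is enough to show that no irreducible $L_{\widehat{\mathfrak{sp}}_{2m}}(n,0)$-module $L(n,\lambda)$ with $\lambda\ne0$ pairs with the vacuum of the commutant in the branching decomposition of the ambient algebra. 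I will argue this in one of two ways, following Case (3) of the type $B$ theorem.

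The first route uses the explicit level-rank branching rules of \cite{Kac 3}. Every summand of the form $L(n,\lambda)\otimes L_{\widehat{\mathfrak{so}}_n}(2m,\mu)$ determines $\lambda$ from $\mu$, so $\mu=0$ forces $\lambda=0$.

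The second route uses the classification of extensions. By \cite[Theorem 5.10]{Dong 19} and \cite[Corollary 4.12, Theorem 7.4]{Kac 3}, the only candidate extensions of $L_{\widehat{\mathfrak{sp}}_{2m}}(n,0)$ are simple current extensions by $L(n,n\Lambda_m)$. Such an extension occurs only for special $(m,n)$, and in those cases I will check directly that $n\Lambda_m$ pairs with a nonvacuum module of the commutant.

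For small $n$ ($n=1,2$, and the degenerate cases where $\mathfrak{so}_n$ is abelian or not simple), the commutant should instead be described concretely, for instance as a lattice orbifold as in Case (2) of the type $B$ theorem. In these cases the double commutant can be checked by hand, or one can pass to an intermediate algebra such as $L_{\widehat{\mathfrak{sp}}_{2m}}(1,0)^{\otimes n}$ once the level-one case is settled.

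The main obstacle is the double commutant step in the exceptional cases where $L_{\widehat{\mathfrak{sp}}_{2m}}(n,0)$ admits a nontrivial simple current extension. There I will first try to show that the extra simple current lands in a nonvacuum sector of the commutant by comparing conformal weights. If that fails, I will replace the ambient algebra by the appropriate intermediate extension, as in Case (1) of the type $B$ theorem. Once the double commutant property holds, Proposition \ref{prop 2043} applied to the ambient algebra, whose reverse category is realizable, gives that $\overline{\mathcal{M}_{L_{\widehat{\mathfrak{sp}}_{2m}}(n,0)}}$ is realizable.
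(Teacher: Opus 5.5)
\textbf{There is a genuine gap.} The embedding your argument rests on, $L_{\widehat{\mathfrak{sp}}_{2m}}(n,0)\otimes L_{\widehat{\mathfrak{so}}_{n}}(2m,0)\subset L_{\widehat{\mathfrak{so}}_{4mn}}(1,0)$, is not a conformal embedding.

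On $\mathbb{C}^{2m}\otimes\mathbb{C}^{n}$, the product of the symplectic form on $\mathbb{C}^{2m}$ with the symmetric form on $\mathbb{C}^n$ is skew. So $\mathfrak{sp}_{2m}\oplus\mathfrak{so}_n$ embeds in $\mathfrak{sp}_{2mn}$, not in an orthogonal algebra. If you double the space to $\mathbb{C}^{2m}\otimes\mathbb{C}^n\otimes\mathbb{C}^2$ to land in $\mathfrak{so}_{4mn}$, three things happen: $\mathfrak{so}_n$ sits at level $4m$ rather than $2m$; an extra $\mathfrak{sl}_2$ at level $mn$ appears; and $\mathfrak{so}_n\oplus\mathfrak{sl}_2$ lies inside $\mathfrak{sp}_{2n}$ acting on $\mathbb{C}^n\otimes\mathbb{C}^2$.

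The correct dual pair in $L_{\widehat{\mathfrak{so}}_{4mn}}(1,0)$ is $L_{\widehat{\mathfrak{sp}}_{2m}}(n,0)\otimes L_{\widehat{\mathfrak{sp}}_{2n}}(m,0)$. The central charges confirm this: $c(\mathfrak{sp}_{2m},n)+c(\mathfrak{sp}_{2n},m)=\frac{mn(2m+1)+mn(2n+1)}{m+n+1}=2mn$. For $m=n=3$, your pair instead gives $c(\mathfrak{sp}_6,3)+c(\mathfrak{so}_3,6)=9+\frac{18}{7}\neq 18$.

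Consequences for your argument:
\begin{enumerate}
\item The commutant of $L_{\widehat{\mathfrak{sp}}_{2m}}(n,0)$ is $L_{\widehat{\mathfrak{sp}}_{2n}}(m,0)$, not $L_{\widehat{\mathfrak{so}}_n}(2m,0)$ or an extension of it.
\item Lemma \ref{lem 1738} cannot be invoked for your pair, since it requires a conformal subalgebra $V\otimes U\subset W$.
\item The branching rules of your first route do not exist.
\item Your second route is unsupported. The claim that every extension of $L_{\widehat{\mathfrak{sp}}_{2m}}(n,0)$ is a simple current extension by $L(n,n\Lambda_m)$ fails within the range of the theorem. The $14$-dimensional representation gives a conformal embedding of $\mathfrak{sp}_6$ at level $5$ into $\mathfrak{sp}_{14}$ at level $1$ (both central charges equal $\frac{35}{3}$). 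There the only nontrivial simple current has weight $\frac{15}{4}\notin\mathbb{Z}$, so this extension is not a simple current extension.
\end{enumerate}

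\textbf{The level-one case is also left open.} For $n=1$, your embedding would make $L_{\widehat{\mathfrak{sp}}_{2m}}(1,0)$ conformal in $L_{\widehat{\mathfrak{so}}_{4m}}(1,0)$, which is false since $\frac{m(2m+1)}{m+2}\neq 2m$. Your fallback through $L_{\widehat{\mathfrak{sp}}_{2m}}(1,0)^{\otimes n}$ presupposes two things you do not supply: the level-one result, and a double commutant theorem for the diagonal coset in type $C$.

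\textbf{How the paper closes the gap.} For $n\ge 2$, use $L_{\widehat{\mathfrak{sp}}_{2m}}(n,0)\otimes L_{\widehat{\mathfrak{sp}}_{2n}}(m,0)\subset L_{\widehat{\mathfrak{so}}_{4mn}}(1,0)\cong V_{D_{2mn}}$. Level-rank duality \cite[Theorem 3.3]{Ostrik 1} gives the commutant $L_{\widehat{\mathfrak{sp}}_{2n}}(m,0)$ and the double commutant property. Then apply Proposition \ref{prop 2043} together with Theorem \ref{thm 2117}. For $n=1$ the paper argues separately:
\begin{itemize}
\item For $m=3$, it uses $L_{\widehat{\mathfrak{sp}}_6}(1,0)\otimes L_{G_2}(1,0)\subset L_{E_7}(1,0)$ and Proposition \ref{prop 2043}.
\item For $m\ge 4$, it regards $L_{\widehat{\mathfrak{sp}}_{2m}}(1,0)$ as an extension of $L_{\widehat{\mathfrak{so}}_m}(4,0)\otimes L_{\widehat{\mathfrak{sl}}_2}(m,0)$ and applies Theorem \ref{thm 1652}, which needs no double commutant condition.
\end{itemize}
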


\begin{proof} There are two cases. 

(1) The case $n=1$. It follows from \cite{Kac 2} that $C_{L_{E_7}(1,0)}(L_{\widehat{\mathfrak{sp}}_{6}}(1,0))\cong L_{G_2}(1,0)$. Moreover, 
$$C_{L_{E_7}(1,0)}(C_{L_{E_7}(1,0)}(L_{\widehat{\mathfrak{sp}}_{6}}(1,0)))\cong L_{\widehat{\mathfrak{sp}}_{6}}(1,0).$$
Meanwhile, using \cite[Corollary 7.5, Table 4]{Kac 3} and \cite[Table 4]{Arcuri 1}, we can deduce that for $m\in \mathbb{Z}_{\ge 4}$, $L_{\widehat{\mathfrak{sp}}_{2m}}(1,0)$ is an extension of $L_{\widehat{\mathfrak{so}}_{m}}(4,0)\otimes L_{\widehat{\mathfrak{sl}}_{2}}(m,0)$. By Proposition \ref{prop 2043}, Theorem \ref{thm 1652} and Theorem \ref{thm 2117}, for $m\in \mathbb{Z}_{\ge 3}$, $\overline{\mathcal{M}_{L_{\widehat{\mathfrak{sp}}_{2m}}(1,0)}}$ is realizable.

(2) The case $n\in \mathbb{Z}_{\ge 2}$. For $m\in \mathbb{Z}_{\ge 2}$, by \cite[Corollary 7.5, Table 3]{Kac 3} and \cite[Table 2]{Arcuri 1}, $L_{\widehat{\mathfrak{sp}}_{2m}}(n,0)\otimes L_{\widehat{\mathfrak{sp}}_{2n}}(m,0)$ is a conformal subalgebra of $L_{\widehat{\mathfrak{so}}_{4mn}}(1,0)$. It follows from \cite[Theorem 3.3]{Ostrik 1} that $C_{L_{\widehat{\mathfrak{so}}_{4mn}}(1,0)}(L_{\widehat{\mathfrak{sp}}_{2m}}(n,0))\cong L_{\widehat{\mathfrak{sp}}_{2n}}(m,0)$. Moreover, $$C_{L_{\widehat{\mathfrak{so}}_{4mn}}(1,0)}(C_{L_{\widehat{\mathfrak{so}}_{4mn}}(1,0)}(L_{\widehat{\mathfrak{sp}}_{2m}}(n,0)))\cong L_{\widehat{\mathfrak{sp}}_{2m}}(n,0).$$
By Proposition \ref{prop 2043} and Theorem \ref{thm 2117}, for $m,n\in \mathbb{Z}_{\ge 2}$, $\overline{\mathcal{M}_{L_{\widehat{\mathfrak{sp}}_{2m}}(n,0)}}$ is realizable.
\end{proof} 

\begin{prop}\label{prop:F4G2}
$\overline{\mathcal{M}_{L_{F_4}(k,0)}}$ and $\overline{\mathcal{M}_{L_{G_2}(k,0)}}$ are realizable for $k=1,2$.
\end{prop}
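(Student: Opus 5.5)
The plan is to realize each of the four categories through a conformal inclusion into an algebra whose reverse category is already known to be realizable. We then invoke Proposition \ref{prop 2043}, after checking the double commutant condition.

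\emph{Level one.} We use the inclusion $L_{F_4}(1,0)\otimes L_{G_2}(1,0)\subset V_{E_8}=L_{E_8}(1,0)$ from the introduction. Here $V_{E_8}$ is holomorphic, so $\overline{\mathcal M_{V_{E_8}}}=\mathcal M_{V_{E_8}}$ is trivially realizable. The commutants are known \cite{Kac 2}:
\[
C_{V_{E_8}}(L_{G_2}(1,0))=L_{F_4}(1,0),\qquad C_{V_{E_8}}(L_{F_4}(1,0))=L_{G_2}(1,0).
\]
We justify these through the branching rule $V_{E_8}\cong L_{F_4}(1,0)\otimes L_{G_2}(1,0)\oplus L_{F_4}(1,\omega_4)\otimes L_{G_2}(1,\omega_1)$. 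The argument has three steps:
\begin{enumerate}[label=(\roman*)]
\item Any vector of $V_{E_8}$ annihilated by the nonnegative modes of $L_{G_2}(1,0)$ must lie in the first summand.
\item The second summand contributes nothing, because $L_{G_2}(1,\omega_1)$ is not the vacuum module.
\item The same argument applies with the roles of $F_4$ and $G_2$ exchanged.
\end{enumerate}
Both algebras are strongly regular by item (3) of the list, so Proposition \ref{prop 2043} (applied with $V=V_{E_8}$) gives realizability for $k=1$. Alternatively, Theorem \ref{thm 1743} applies directly, showing that $\overline{\mathcal M_{L_{F_4}(1,0)}}\simeq\mathcal M_{L_{G_2}(1,0)}$.

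\emph{Level two.} The natural route is the diagonal coset. We have
\[
L_{F_4}(2,0)\subset L_{F_4}(1,0)^{\otimes 2},\qquad L_{G_2}(2,0)\subset L_{G_2}(1,0)^{\otimes 2}.
\]
By Lemma \ref{lem 2037} and the level-one case, the reverse categories of $L_{F_4}(1,0)^{\otimes 2}$ and $L_{G_2}(1,0)^{\otimes 2}$ are realizable. Both tensor squares are strongly regular by Lemma \ref{lem 1928}.

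Set $V=L_{G_2}(1,0)^{\otimes 2}$ and $V^1=L_{G_2}(2,0)$. The commutant $C_V(V^1)$ is a $W$-type coset of central charge $\tfrac{14}{5}\cdot2-\tfrac{14\cdot 2}{6}=\tfrac{14}{15}$. This lies in the unitary minimal series, namely $c_m$ with $(m+2)(m+3)=90$, i.e.\ $m=7$. It is known (Goddard--Kent--Olive type identification, or the tricritical Potts/$W_3$ description) that this coset is an extension of $L(c_7,0)$, hence strongly regular by Theorem \ref{thm 1759} and Theorem \ref{thm 1736}.

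For $F_4$ the coset $L_{F_4}(1,0)^{\otimes2}/L_{F_4}(2,0)$ has central charge $\tfrac{26}{5}\cdot 2-\tfrac{52\cdot 2}{11}=\tfrac{52}{55}$. We likewise identify it as an extension of a Virasoro minimal model, with $c_m$ for $(m+2)(m+3)=110$, i.e.\ $m=8$. In both cases the coset is strongly regular.

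Alternatively, one can use conformal embeddings at level two:
\[
L_{G_2}(2,0)\otimes L_{\widehat{\mathfrak{sl}}_2}(28,0)\subset L_{\widehat{\mathfrak{so}}_{14}}(1,0)\quad\text{(via }\mathfrak{so}_7\supset G_2\text{)}.
\]
For $F_4$ one would use an analogous embedding into $L_{E_6}(1,0)$ or $L_{\widehat{\mathfrak{so}}_{26}}(1,0)$, and then invoke the type $B$/$D$ results already proven. I would pick whichever has an explicit decomposition in \cite{Kac 3,Arcuri 1}.

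\emph{Double commutant.} The main obstacle is verifying
\[
C_V(C_V(V^1))=V^1.
\]
The double commutant $C_V(C_V(V^1))$ is an extension of $L_{G_2}(2,0)$ (respectively $L_{F_4}(2,0)$) inside $V$, and it has the same weight-one space, namely the diagonal Lie algebra. So any extra summand would be an irreducible highest weight module of positive integral conformal weight occurring in the branching of $V$ and paired with the vacuum of the coset. I would check from the explicit level-two branching rules (e.g.\ \cite{Kac 3}) that the only such module is the vacuum.

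The tool is the level-two conformal weights $h_\lambda=\frac{(\lambda,\lambda+2\rho)}{2(2+h^\vee)}$. For $G_2$ these are $h_{\omega_1}=\tfrac{1}{3}$, $h_{2\omega_1}=\tfrac{7}{9}\cdot$ and $h_{\omega_2}=\tfrac{2}{3}$, with similar non-integral values for $F_4$. In general no nontrivial module has integral weight, so there is no nontrivial extension of $L_{G_2}(2,0)$ or $L_{F_4}(2,0)$. This is the same style of argument as the "no simple current extension" argument of \cite{Dong 19} used in the type $B$ case. With this in hand, Proposition \ref{prop 2043} completes the proof.
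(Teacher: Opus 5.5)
Your level-one argument is the paper's: mutual commutants in $V_{E_8}$, then Theorem \ref{thm 1743}.

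At level two you take a genuinely different route. The paper embeds $L_{F_4}(2,0)$ and $L_{G_2}(2,0)$ in holomorphic $c=24$ vertex operator algebras $H_F,H_G$ from Schellekens' list, with $(H_F)_1=A_{8,2}F_{4,2}$ and $(H_G)_1=C_{5,3}G_{2,2}A_{1,1}$, and applies Theorem \ref{thm 1743} directly. You instead use the diagonal cosets in $L_{\mathfrak g}(1,0)^{\otimes 2}$, whose reverse categories are realizable by the level-one case and Lemma \ref{lem 2037}, and then invoke Proposition \ref{prop 2043}. The decisive double-commutant step is the same in both: no nontrivial irreducible level-two module has integral conformal weight, and the weights you list agree with the paper's.

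Your route is more self-contained. It needs only $L_{F_4}(1,0)\otimes L_{G_2}(1,0)\subset V_{E_8}$, not the existence of two Schellekens algebras. It effectively realizes the reverse categories as commutants in $V_{E_8}\otimes V_{E_8}$, in the spirit of $L(\tfrac12,0)\otimes L_{E_8}(2,0)\subset V_{E_8\oplus E_8}$.

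The paper's route gets strong regularity of the commutant for free. There $C_{H_F}(L_{F_4}(2,0))$ and $C_{H_G}(L_{G_2}(2,0))$ are extensions of the strongly regular affine factors, so Lemma \ref{lem 1738} applies at once. In your route this step needs an actual argument, and the one you give does not work:
\begin{itemize}
\item The tricritical Potts model has $c=6/7$; it is the $E_6$ diagonal coset, so that description does not apply here.
\item Knowing $c=c_m$ alone does not force the Virasoro subalgebra of the coset to be the simple quotient $L(c_m,0)$.
\end{itemize}
The right justification is unitarity. Since $L_{\mathfrak g}(1,0)^{\otimes 2}$ is unitary, the Virasoro subalgebra generated by $\omega-\omega^1$ is a vacuum module with a positive-definite invariant form. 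Hence it is $L(c_7,0)$, respectively $L(c_8,0)$. Then $L_{\mathfrak g}(2,0)\otimes L(c_m,0)$ is a conformal subalgebra of $L_{\mathfrak g}(1,0)^{\otimes 2}$, and Lemma \ref{lem 1738} gives strong regularity of the coset.

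Finally, discard your alternative level-two embeddings:
\begin{itemize}
\item $L_{G_2}(2,0)\otimes L_{\widehat{\mathfrak{sl}}_2}(28,0)\subset L_{\widehat{\mathfrak{so}}_{14}}(1,0)$ is not a conformal embedding. The central charges do not match ($\tfrac{14}{3}+\tfrac{14}{5}\neq 7$), and $7\otimes 2$ is symplectic, not orthogonal.
\item $F_4$ sits conformally in $L_{E_6}(1,0)$ and $L_{\widehat{\mathfrak{so}}_{26}}(1,0)$ at levels $1$ and $3$, not $2$.
\end{itemize}
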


\begin{proof}
First, consider the case $k=1$. By \cite{Kac 2}, there is a conformal embedding
\[
L_{F_4}(1,0)
 \otimes L_{G_2}(1,0)
 \subset V_{E_8}.
\]
Moreover, 
\[
C_{V_{E_8}}\big(L_{F_4}(1,0)\big)
 \cong L_{G_2}(1,0),
\qquad
C_{V_{E_8}}\big(L_{G_2}(1,0)\big)
 \cong L_{F_4}(1,0).
\]
Since $V_{E_8}$ is holomorphic, Theorem~\ref{thm 1743}
gives
\[
\overline{\mathcal M_{L_{F_4}(1,0)}}
 \cong
\mathcal M_{L_{G_2}(1,0)}.
\]
Reversing the braiding also gives
\[
\overline{\mathcal M_{L_{G_2}(1,0)}}
 \cong
\mathcal M_{L_{F_4}(1,0)}.
\]

For $k=2$, let $H_F$ and $H_G$ be strongly regular holomorphic
vertex operator algebras of central charge $24$ whose weight-one
Lie algebras have types
\[
(H_F)_1=A_{8,2}F_{4,2},
\qquad
(H_G)_1=C_{5,3}G_{2,2}A_{1,1},
\]
respectively \cite{Ekeren 1,Schellekens 1}. The corresponding affine vertex operator subalgebras
are conformal, since
\[
\frac{160}{11}+\frac{104}{11}=24
\]
and
\[
\frac{55}{3}+\frac{14}{3}+1=24.
\]
Set
\[
U_F=C_{H_F}\big(L_{F_4}(2,0)\big),
\qquad
U_G=C_{H_G}\big(L_{G_2}(2,0)\big).
\]
By Lemma~\ref{lem 1738}, $U_F$ and $U_G$ are strongly
regular.

The nonzero conformal weights of the irreducible
$L_{F_4}(2,0)$-modules are
\[
\frac{6}{11},\quad \frac{9}{11},\quad
\frac{12}{11},\quad \frac{13}{11},
\]
while those of the irreducible
$L_{G_2}(2,0)$-modules are
\[
\frac{1}{3},\quad \frac{2}{3},\quad \frac{7}{9}.
\]
Thus, neither vertex operator algebra has a nontrivial irreducible
module of integral conformal weight. It follows that
\[
C_{H_F}(U_F)=L_{F_4}(2,0),
\qquad
C_{H_G}(U_G)=L_{G_2}(2,0).
\]
Theorem~\ref{thm 1743} therefore yields
\[
\overline{\mathcal M_{L_{F_4}(2,0)}}
 \cong \mathcal M_{U_F},
\qquad
\overline{\mathcal M_{L_{G_2}(2,0)}}
 \cong \mathcal M_{U_G}.
\]
Hence, all four reverse module categories are realizable.
\end{proof}



We turn to the case of parafermion vertex operator algebras. The following result can be found in \cite{Dong 13,Dong 7,Dong 14}.

\begin{thm}
Let $\mathfrak{g}$ be a finite-dimensional simple Lie algebra, $Q$ be the root lattice of $\mathfrak{g}$, $Q_L$ be the sublattice of $Q$ spanned by the long roots, $k$ be a positive integer. Then the commutant vertex operator algebra $C_{L_{\widehat{\mathfrak{g}}}(k,0)}(K(\mathfrak{g},k))\cong V_{\sqrt{k}Q_L}$. Moreover, 
$$C_{L_{\widehat{\mathfrak{g}}}(k,0)}(C_{L_{\widehat{\mathfrak{g}}}(k,0)}(K(\mathfrak{g},k)))\cong K(\mathfrak{g},k).$$
\end{thm}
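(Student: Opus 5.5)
The statement is quoted from \cite{Dong 13,Dong 7,Dong 14}, so the plan is to reconstruct the standard argument in the setup of this paper. Throughout we write $L=L_{\widehat{\mathfrak g}}(k,0)$.

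\textbf{Step 1: the Heisenberg subalgebra and the lattice commutant.} Let $\mathfrak h\subset\mathfrak g$ be a Cartan subalgebra. The elements $h(-1)\mathbf 1$, $h\in\mathfrak h$, generate a Heisenberg vertex operator subalgebra $M_{\hat{\mathfrak h}}(k,0)\subset L$. By definition $K(\mathfrak g,k)=C_L(M_{\hat{\mathfrak h}}(k,0))$.

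Decomposing $L$ under $\hat{\mathfrak h}$ gives
\[L=\bigoplus_{\lambda\in Q}M_{\hat{\mathfrak h}}(k,\lambda)\otimes K_{\lambda},\]
where $K_0=K(\mathfrak g,k)$. The summand with $\lambda\in Q$ is nonzero exactly when the weight $\lambda$ occurs in $L$, which is every $\lambda\in Q$ since $L$ is integrable.

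Inside this decomposition, the vectors $e^{\alpha}$ for long roots $\alpha$ are realized by $(e_\alpha(-1))^k\mathbf 1\neq 0$. They have conformal weight $k(\alpha,\alpha)/2\cdot\frac{1}{k}\cdot k$, consistent with $\sqrt kQ_L$, and they commute with $K(\mathfrak g,k)$. Using the ``vacuum-like'' argument of \cite{Dong 13}, one shows that these vectors together with $M_{\hat{\mathfrak h}}(k,0)$ generate $V_{\sqrt kQ_L}$ inside $C_L(K(\mathfrak g,k))$.

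\textbf{Step 2: the reverse inclusion for $C_L(K(\mathfrak g,k))$.} Any $w\in C_L(K(\mathfrak g,k))$ lies in $\bigoplus_\lambda M_{\hat{\mathfrak h}}(k,\lambda)\otimes (K_\lambda)^{\mathrm{vac}}$. Here $(K_\lambda)^{\mathrm{vac}}$ denotes the vacuum-like vectors of the $K_0$-module $K_\lambda$, i.e. those annihilated by all $u_n$ with $u\in K_0$, $n\ge 0$. By the theory of vacuum-like vectors, such vectors have $L^K(0)$-weight zero. The conformal weight of the top of $K_\lambda$ is computed in \cite{Dong 7}, and it vanishes precisely when $\lambda\in kQ_L$, in which case $K_\lambda\cong K_0$. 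Hence
\[C_L(K(\mathfrak g,k))=\bigoplus_{\lambda\in kQ_L}M_{\hat{\mathfrak h}}(k,\lambda)\cong V_{\sqrt kQ_L}.\]
This step is the main obstacle. It requires the identification $K_\lambda\cong K_0$ exactly for $\lambda\in kQ_L$, which I would take from \cite{Dong 14} (the simple current property of $K_\lambda$ and the description $K_{\lambda}\cong K_{\lambda+k\beta}$ for long roots $\beta$).

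\textbf{Step 3: the double commutant.} We need $C_L(V_{\sqrt kQ_L})=K(\mathfrak g,k)$. Since $M_{\hat{\mathfrak h}}(k,0)\subset V_{\sqrt kQ_L}$, we get
\[C_L(V_{\sqrt kQ_L})\subset C_L(M_{\hat{\mathfrak h}}(k,0))=K(\mathfrak g,k).\]
Conversely, $K(\mathfrak g,k)$ commutes with $V_{\sqrt kQ_L}$ by construction. Alternatively, one can invoke the general identity $C_V(U)=C_V(C_V(C_V(U)))$ from \cite{Frenkel 3} with $U=M_{\hat{\mathfrak h}}(k,0)$. This gives the double-commutant statement.
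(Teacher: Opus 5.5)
The paper gives no proof of this statement; it quotes it from \cite{Dong 13,Dong 7,Dong 14}. Your outline is the standard argument from those sources: the Heisenberg decomposition $L=\bigoplus_{\lambda\in Q}M_{\hat{\mathfrak h}}(k,\lambda)\otimes K_\lambda$, the vectors $e_\alpha(-1)^k\mathbf 1$ for long roots $\alpha$, and a vacuum-like/lowest-weight analysis of the multiplicity spaces $K_\lambda$. It is correct in substance. In Step~1 your weight expression is garbled. The point is that $e_\alpha(-1)^k\mathbf 1$ has $L(0)$-weight $k=\frac{(k\alpha,k\alpha)}{2k}$, which is the lowest weight of $M_{\hat{\mathfrak h}}(k,k\alpha)$, so its $K_{k\alpha}$-component has weight zero.

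Two refinements are worth making.

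First, in Step~2 you do not need to import $K_\lambda\cong K_0$ for $\lambda\in kQ_L$. That isomorphism is usually deduced from the embedding $V_{\sqrt kQ_L}\otimes K(\mathfrak g,k)\subset L$, i.e.\ from the content of your Step~1, so quoting it is at best redundant and risks circularity. Argue directly instead:
\begin{itemize}
\item Write $\mu=k\Lambda_0+\lambda-n\delta$ for the affine weight of a vector of $\mathfrak h$-weight $\lambda$ and $L(0)$-weight $n$. Its $L^K(0)$-weight is $n-\frac{(\lambda,\lambda)}{2k}=-\frac{1}{2k}(\mu,\mu)$.
\item Kac's inequality for integrable modules gives $(\mu,\mu)\le(k\Lambda_0,k\Lambda_0)=0$, with equality iff $\mu\in\widehat W(k\Lambda_0)=\{t_\beta(k\Lambda_0)\mid\beta\in Q_L\}$.
\item Hence the $L^K(0)$-weights of $K_\lambda$ are $\ge 0$, and $K_\lambda$ has a nonzero weight-zero space only when $\lambda\in kQ_L$.
\item For $\lambda\in kQ_L$ that space is an extremal weight space of $L$. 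It is therefore one-dimensional and spanned by the vacuum-like vector supplied by Step~1.
\end{itemize}
Since any vacuum-like vector $w$ satisfies $L^K(0)w=\omega^K_1w=0$, this gives exactly $C_L(K(\mathfrak g,k))=\bigoplus_{\lambda\in kQ_L}M_{\hat{\mathfrak h}}(k,\lambda)$.

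Second, the double-commutant assertion needs none of Steps~1--2. Since $K(\mathfrak g,k)=C_L(M_{\hat{\mathfrak h}}(k,0))$ is itself a commutant, $C_L(C_L(K(\mathfrak g,k)))=K(\mathfrak g,k)$ follows at once from $C_V(U)=C_V(C_V(C_V(U)))$. This is the alternative you mention, and it is the cleaner way to present it.
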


This gives us the following corollary.

\begin{coro}
Let $\mathfrak{g}$ be a finite-dimensional simple Lie algebra, $k$ be a positive integer. If $\overline{\mathcal{M}_{L_{\widehat{\mathfrak{g}}}(k,0)}}$ is realizable, then $\overline{\mathcal{M}_{K(\mathfrak{g},k)}}$ is realizable. \qed
\end{coro}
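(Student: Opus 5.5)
The plan is to apply Proposition \ref{prop 2043} with $V=L_{\widehat{\mathfrak{g}}}(k,0)$ and $V^1=K(\mathfrak{g},k)$. By hypothesis, $\overline{\mathcal{M}_{L_{\widehat{\mathfrak{g}}}(k,0)}}$ is realizable. Moreover, $L_{\widehat{\mathfrak{g}}}(k,0)$ is strongly regular by item (3) of the list of examples, and $K(\mathfrak{g},k)$ is strongly regular by item (4). So only the two hypotheses concerning the commutant need to be checked.

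First, by the preceding theorem, $C_{L_{\widehat{\mathfrak{g}}}(k,0)}(K(\mathfrak{g},k))\cong V_{\sqrt{k}Q_L}$. The lattice $\sqrt{k}Q_L$ is positive definite, since $Q_L$ is a sublattice of the root lattice. It is also even: with the normalization in which long roots have squared length $2$, every element of $Q_L$ has even norm, and scaling by $\sqrt{k}$ multiplies norms by $k$. Hence, by item (1) of the list, $V_{\sqrt{k}Q_L}$ is strongly regular. Second, the same theorem gives the double commutant property
\[C_{L_{\widehat{\mathfrak{g}}}(k,0)}\bigl(C_{L_{\widehat{\mathfrak{g}}}(k,0)}(K(\mathfrak{g},k))\bigr)\cong K(\mathfrak{g},k).\]

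With these two facts in hand, Proposition \ref{prop 2043} yields that $\overline{\mathcal{M}_{K(\mathfrak{g},k)}}$ is realizable. As a by-product it also recovers the realizability of $\overline{\mathcal{M}_{V_{\sqrt{k}Q_L}}}$, which is consistent with Theorem \ref{thm 2007}. Alternatively, one could invoke the corollary following Proposition \ref{prop 2043}, which needs only the strong regularity of the commutant.

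The only point requiring any care is that the isomorphisms in the preceding theorem are equalities of subalgebras inside $L_{\widehat{\mathfrak{g}}}(k,0)$, and not merely abstract isomorphisms. This holds because the parafermion algebra is defined as the commutant of the Heisenberg subalgebra, so the double commutant statement is literally $C_V(C_V(V^1))=V^1$. Beyond this, there is no substantive obstacle.
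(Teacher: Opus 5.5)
Your proposal is correct and is exactly the argument the paper intends: the corollary is marked \qed because it follows from Proposition \ref{prop 2043} with $V=L_{\widehat{\mathfrak{g}}}(k,0)$ and $V^1=K(\mathfrak{g},k)$, where the preceding theorem supplies both the strongly regular commutant $V_{\sqrt{k}Q_L}$ and the double commutant condition. One small caveat concerns your alternative route. The corollary after Proposition \ref{prop 2043} concludes realizability for the commutant $C_V(V^1)$, so to reach $K(\mathfrak{g},k)$ you would have to take $V^1=V_{\sqrt{k}Q_L}$. You would then still need $C_V(V_{\sqrt{k}Q_L})=K(\mathfrak{g},k)$, which is the same double commutant input, so that route saves nothing.
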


\begin{center}
\Large{\bf Declarations}
\end{center}

{\bf Conflict of interest.} The authors declare that there is no conflict of interest.

{\bf Data availability.} No research data were generated or analyzed for this manuscript.

\begin{CJK*}{UTF8}{gbsn}
\phantomsection

\end{CJK*}
\end{document}